\tikzset{decorate sep/.style 2 args=
{decorate,decoration={shape backgrounds,shape=circle,shape size=#1,shape sep=#2}}}
\title{Convergence Analysis of Grad's Hermite Expansion for Linear Kinetic Equations}
\author{Neeraj Sarna$^{1,*}$, Jan Giesselmann$^2$, Manuel Torrilhon$^1$}
\date{%
$^*${\small \texttt{sarna@mathcces.rwth-aachen.de}}\\
$^{1}$Center for Computational Engineering \&\ Department of Mathematics,\\RWTH\ Aachen University, Germany\\
$^{2}$Department of Mathematics,\\TU Darmstadt,\ Darmstadt, Germany\\
}
\begin{document}
\maketitle
\begin{abstract}
In (Commun Pure Appl Math 2(4):331-407, 1949), Grad proposed a Hermite series expansion for approximating solutions to kinetic equations that have an unbounded velocity space. However, for initial boundary value problems, poorly imposed boundary conditions lead to instabilities in Grad's Hermite expansion, which could result in non-converging solutions. For linear kinetic equations, a method for posing stable boundary conditions was recently proposed for (formally) arbitrary order Hermite approximations. In the present work, we study $L^2$-convergence of these stable Hermite approximations, and prove explicit convergence rates under suitable regularity assumptions on the exact solution. We confirm the presented convergence rates through numerical experiments involving the linearised-BGK equation of rarefied gas dynamics. 
\end{abstract}
\section*{Introduction}
Evolution of charged or neutral particles (under certain conditions of interaction) can be modelled by linear kinetic equations. The explicit form of these kinetic equations depends on the physical system they model and many of these forms have been extensively studied in the past; see \cite{Martin,Ukai,Grad1949,ExistenceLinearisedBE}. Broadly speaking, different forms of kinetic equations have mainly three differentiating factors: the space of possible velocities of particles, i.e., the so-called velocity space; the external or the internal forces that act on the particles; and the collision operator that models the interaction between different particles. 
In the present work, we are concerned with linear kinetic equations that have the whole $\mbb R^d$ ($1\leq d\leq 3$) as their velocity space, have no external force acting on the particles and have a collision operator that is bounded and negative semi-definite on $L^2(\mbb R^d)$. Such kinetic equations usually arise from the kinetic gas theory after the linearisation of the non-linear Boltzmann or the BGK equation \cite{Carlos}. 

Mostly, an exact solution to a kinetic equation is not known and one seeks an approximation through a temporal, spatial and velocity space discretization. In the present work, we analyse a Galerkin-type velocity space approximation where we approximate the solution's velocity dependence in a finite-dimensional space \cite{GambaPetrov,MomentsToDVM}. Our finite-dimensional space is the span of a finite number of Grad's tensorial Hermite polynomials, which results in the so-called Grad's moment approximation \cite{Grad1949}. We consider initial boundary value problems (IBVPs), and equip the Hermite approximation with boundary conditions that lead to its $L^2$-stability \cite{Sarna2018}.

The convergence behaviour of moment approximations, particularly for IBVPs, is not very well-understood. Lack of understanding originates from expecting a monotonic (and test case-independent) decrease in the error as the number of moments are increased but such a decrease is usually not observed in practise \cite{Torrilhon2015}.
It is known that convergence of Galerkin methods is solution's regularity dependent, which is in-turn test case dependent. Therefore,
one possible way to understand the test-case dependent convergence of moment approximations is to reformulate them as Galerkin methods \cite{ConvergenceMoments,Egger2016,Egger2012}. We use such a reformulation for the Grad's moment approximation to prove that it convergences (in the $L^2$-sense) to the kinetic equation's solution.  

Reformulation of a moment approximation as a Galerkin method allows us to use the following (standard) steps for convergence analysis. Firstly, we define a projection onto the Hermite approximation space and use it to split the approximation error into two parts: (i) one part containing the error in the expansion coefficients (or the moments), and (ii) the other part containing the projection error.
Secondly, we bound the error in the expansion coefficients in terms of the projection error. To develop this bound, we exploit the $L^2$-stability property of the Hermite approximation, which is possible by defining the projection such that it satisfies the same boundary conditions as those satisfied by the moment approximation. We complete our analysis by proving that the projection error converges to zero.

It is worth noting that the orthogonal projection onto the approximation space does not satisfy the same boundary condition as the numerical solution and, thus, the $L^2$-stability results are not available. Indeed, from a technical perspective, defining a suitable projection operator is a key contribution of this work.

In previous works \cite{MomentsToDVM,ConvergenceMoments}, for kinetic equations with an unbounded velocity space, authors have analysed convergence of Galerkin methods that use a grid in the velocity space. Although easier to implement, such methods fail to preserve the Galilean and the rotational invariance of kinetic equations. In contrast, Grad's tensorial Hermite polynomials cannot be mapped to a velocity space grid but they do preserve especially rotational invariance of kinetic equations. This allows for an approximation that is physically more sound. To the best of our knowledge, present work is the first step towards analysing the convergence of a rotational invariant Galerkin method for IBVPs involving kinetic equations with an unbounded velocity domain. 

Other approximation schemes that lead to a rotational invariant approximation (for both bounded and unbounded velocity spaces) use spherical harmonics instead of Grad's Hermite polynomials; see \cite{PNIntro,Christian,Egger2016}. Preliminary analysis shows that our framework is extendable to such approximations. Indeed, using our current framework one can even analyse the convergence of a general rotational invariant Galerkin scheme for a general rotational invariant kinetic equation considered in \cite{abstractKinetic}. Moreover, our framework has an extension to linear approximations of the non-linear Boltzmann equation \cite{GambaPetrov}. We leave an extension of our framework to other linear kinetic equations as a part of our future work.

A summary of the article's structure is as follows: the first section discusses the kinetic equation and its Grad's moment approximation; the second section discusses the projection operator and contains the main convergence result; the fourth section discusses an example of the linear kinetic equation that arises from the kinetic gas theory and; the fifth section contains our numerical experiment.

\section{Linear Kinetic Equation}
With $f:(0,T)\times \Omega\times\mbb R^d\to\mbb R$ we represent the solution to our kinetic equation where $\Omega$ is the physical space, $(0,T)$ is a bounded temporal domain and $\mbb R^d$ is the velocity space. For simplicity, we focus most of our discussion on the case for which the spatial domain is the open half-space $\Omega :=\mbb R^-\times \mbb R^{d-1}$ ($1\leq d\leq 3$). In \autoref{extension C2} we discuss how our framework can be extended to general $C^2$ spatial domains. With $V := (0,T)\times \Omega$ we represent the space-time domain and with $D := V\times \mbb R^d$ we represent our space-time-velocity domain. With $\nabla_{t,x}:=(\pd_t,\pd_{x_1},\dots,\pd_{x_d})$ we denote the gradient operator along the space-time domain and using it we define the following operator 
\begin{equation}
\begin{aligned}
\DiffOp:= &\pd_t + \sum_{i=1}^d\xi_i\pd_{x_i}- Q,\hsp\xi\in\mbb R^d,\\
=& (1,\xi)\cdot \nabla_{t,x} - Q, \label{def L}
\end{aligned}
\end{equation}
where $Q : L^2(\mbb R^d) \rightarrow L^2(\mbb R^d)$ is the collision operator.
The second form of the above operator will be helpful in understanding the regularity of a strong solution of an IBVPs involving $\DiffOp$. We restrict our analysis to the case for which the operator $Q$ satisfies the conditions enlisted below.
Later, in \autoref{an example}, we give examples of collision operators that satisfy the assumption below.
\begin{assumption}\label{assumption Q}
We assume that $Q : L^2(\mbb R^d) \rightarrow L^2(\mbb R^d)$ is: (i) linear, (ii) bounded, (iii) negative semi-definite, and (iv) self-adjoint. 
\end{assumption}
\noindent

 We consider $\mcal L$ as a mapping from $\OpSp$ to $L^2(D)$ where $\OpSp$ is the graph space of $\mcal L$ and is defined as
\begin{gather}
\OpSp := \{v\in \spaceXVT\hsp :\hsp \DiffOp v \in \spaceXVT\}\hspB\text{where}\hspB\|f\|_{\OpSp}^2 := \|f\|^2_{\spaceXVT} + \|\mcal L f\|^2_{\spaceXVT}.\label{def OpSp}
\end{gather}
For IBVPs involving the operator $\DiffOp$, we need to define trace operators over $\OpSp$. To define these trace operators, we first define the following boundaries of the set $D= (0,T)\times \Omega \times \mbb R^d$
\begin{gather*}
\Sigma^{\pm} := (0,T)\times \bcXi^{\pm},\hspB V^{\pm}:=\{T^{\pm}\}\times \Omega\times\mbb R^d,\hspB  \pd D := \Sigma^{+}\cup\Sigma^-\cup V^{+}\cup V^-,
\end{gather*}
where we set $T^+ = T$ and $T^- = 0$.
Moreover, $\bcXi^{\pm}$ is a result of splitting $\bc\times\mbb R^d$ into two non-overlapping parts and is defined as: $\bcXi^{\pm}:=\bc\times\mbb R^{\pm}\times\mbb R^{d-1}$.
Thus $\bcXi^+$ and $\bcXi^-$ are sets containing points in $\bc\times\mbb R^d$ corresponding to outgoing and incoming velocities, respectively. Using these boundary sets, in the following we define the relevant trace operators. A detailed derivation of these operators can be found in \cite{Ukai}. 
\begin{definition} \label{def: traces}
Traces of functions in $\OpSp$ are well-defined in $L^2(\pd D,|\xi_1|)$, i.e., in the $L^2$ space of functions over $\pd D$ with the Lebesgue measure weighted with $|\xi_1|$. We denote the trace operator by $$\gamma_D:\OpSp\to L^2(\pd D,|\xi_1|).$$
To restrict $\gamma_D$ to $\Sigma^{\pm}$ and $\Sigma=\Sigma^{+}\cup\Sigma^-$, we define
$\gamma^{\pm} f = \gamma_Df \rvert_{\Sigma^{\pm}}$ and $\gamma f = \gamma_Df \rvert_{\Sigma}$. 
Similarly, we interpret $f(T^{\pm})$ as 
$
f(T^{\pm}) = \gamma_Df\rvert_{V^{\pm}}$.
\end{definition}
Using the above trace operators, we give the following IBVP 
\begin{align}
\mcal L f = 0\hspB \text{in}\hspB D,\hspB
f(0) = f_{I}\hspB \text{on} \hspB V^-,\hspB
\gamma^-f = \fIn \hspB&\text{on}\hspB \Sigma^-, \label{BE}
\end{align}
where $f_I\in\spaceXV$ and $\fIn\in L^2(\Sigma^-;|\xi_1|)\cap L^2(\mbb R^-\times\mbb R^{d-1};H^{1/2}(\bc\times (0,T)))$ are some suitable initial and boundary data, respectively. Here $H^{\frac{1}{2}}$ denotes a standard fractional Sobolev space. The reason behind assuming $f_I$ to be in $\spaceXV$ and $\fIn$ to be in $\spaceXVMinus$ is clear from the definition of trace operators whereas, the assumption that $\fIn \in L^2(\mbb R^-\times\mbb R^{d-1};H^{1/2}(\bc\times (0,T)))$ will be made clear in \autoref{regularity assumption}.

We stick to strong solutions of the above IBVP and we define them as follows \cite{Ukai}.

\begin{definition}\label{def strong sol}
Let $f\in\OpSp$ where $\OpSp$ is as given in \eqref{def OpSp}. Then, $f$ is a strong solution to the linear kinetic equation if it satisfies 
\begin{gather*}
\lan v,\mcal Lf\ran_{\spaceXVT} = 0,\hspB \forall\hspB v\in\spaceXVT,\hspB \gamma^-f = \fIn,\hspB f(0)=f_I. 
\end{gather*}
\end{definition}
\noindent
It has been shown in \cite{Ukai} that the IBVP \eqref{BE} has a unique strong solution and for our convergence analysis, we will make additional regularity assumptions on this strong solution. 
We start with defining the notion of moments. 

\subsection{Moments and Hermite polynomials}
We define tensorial Hermite polynomials with the help of the multi-index $\multiIndex{i}$ as
\begin{equation}
\begin{gathered}
\basisPsi{i}(\xi):=\displaystyle\prod_{p=1}^d He_{\beta_p^{(i)}}\left(\xi_p\right),\hspB \beta^{(i)} := \left(\beta_1^{(i)},\dots,\beta_d^{(i)}\right),
 \label{def psi}
\end{gathered}
\end{equation}
where, the Hermite polynomials ($He_k$) enjoy the property of orthogonality and recursion 
\begin{subequations}
\begin{gather}
\frac{1}{\sqrt{2\pi}}\int_{\mbb R} He_i\left(\xi\right)He_j\left(\xi\right)\exp\left(-\frac{\xi^2}{2}\right)d\xi = \delta_{ij}\hspB \Rightarrow\hspB \int_{\mbb R^d}\psi_{\beta^{(k)}}\psi_{\beta^{(l)}}f_0 d\xi = \displaystyle\prod_{p=1}^d\delta_{\beta_p^{(k)}\beta_p^{(l)}},\label{orthonormality}\\
 \sqrt{i + 1}He_{i+1}\left(\xi\right) + \sqrt{i}He_{i-1}\left(\xi\right) = \xi He_i\left(\xi\right).	\label{recursion}
\end{gather}
\end{subequations}
Above, $f_0$ is a Gaussian weight given as 
\begin{gather}
f_0(\xi) := \exp\left(-\xi\cdot \xi/2\right)/\sqrt[d]{2\pi}. \label{def f0}
\end{gather}
The quantity $\normBeta{i}$ is the so-called degree of the basis function $\basisPsi{i}$. Below we define the $\normBeta{i}$-th order moment of a function in $\spaceV$.
\begin{definition} \label{def moments}
Let $n(m)$ represent the total number of tensorial Hermite polynomials (i.e. $\basisPsi{i}(\xi)$) of degree $m$ and let $\basis{m}(\xi)\in\mbb R^{n(m)}$ represent a vector containing all of such basis functions. 
Using $\basis{m}(\xi)$, we define $\mom{m}:\spaceV\to\mbb R^{n(m)}$ as:
$
\mom{m}(r) = \intV{\sqrt{f_0}\basis{m}(\xi)r(\xi)}$, $ \forall r\in \spaceV.$
Thus, $\mom{m}(r)$ represents a vector containing all the $m$-th order moments of $r$. To collect all the moments of $r$ which are of order less than or equal to $M$ ($m\leq M$), we additionally define
\begin{align*}
\CollectBasis{M}(\xi) = \left(\basis{0}(\xi)',\basis{1}(\xi)',\dots ,\basis{M}(\xi)'\right)'
,\quad
\CollectMom{M}(r) = \left(\mom{0}(r)',\mom{1}(r)',\dots ,\mom{M}(r)'\right)',
\end{align*}
where $\CollectBasis{M}(\xi)\in\mbb R^{\Xi^M}$ and $\CollectMom{M}:\spaceV\to \mbb R^{\Xi^M}$ with $\Xi^M = \sum_{m=0}^Mn(m)$ being the total number of moments. Above and in all of our following discussion, prime ( $'$ ) over a vector will represent its transpose.
\end{definition}
\subsection{Regularity Assumptions} \label{sec: regularity assumption}
For further discussion we recall that $V=\Omega\times(0,T)$ and $D=V\times\mbb R^d$. With $C^k([0,T];X)$ we denote a $k$-times continuously differential function of time with values in some Hilbert space $X$. We equip $C^k([0,T];X)$ with the norm $\|g\|_{C^k([0,T];X)} =\opn{max}_{j\leq k}\|\pd_t^j g\|_{C^0([0,T];X)}$ where $\|g\|_{C^0([0,T];X)} = \opn{max}_{t\in[0,T]}\|g(t)\|_X$. 

To capture velocity space regularity of solutions, we make use of the Hermite-Sobolev space $W_H^k(\mbb R^d)$ which is the image of $L^2(\mbb R^d)$ under the inverse of the Hermite Laplacian operator $\left(\Delta_H\right)^k = (-2\Delta + \frac{1}{2} \xi\cdot \xi)^{k}$; see \cite{HermiteSobolev} for details. One can show that a tensorial Hermite polynomial ($\basisPsi{m}$) is an eigenfunction of $\Delta_H$ with an eigenvalue of $(2m+d)$ and therefore, one can define norm of functions in $L^2(\Omega;W_H^k(\mbb R^d))$ as 
\begin{gather*}
\|f\|_{L^2(\Omega;W_H^k(\mbb R^d))} := \left(\sum_{m = 0}^{\infty}(2m+d)^{2k} \|\mom{m}(f(t,.,.))\|^2_{L^2(\Omega;\mbb R^{n(m)}))}\right)^{1/2}.
\end{gather*}
For further discussion we assume that the solution to our IBVP, along with its derivatives, lies in $C^0([0,T];L^2(\Omega;W_H^k(\mbb R^d)))$ for some $k$. We summarise this assumption in the following.

\begin{assumption} \label{regularity assumption}

Let $f$ be a strong solution to the kinetic equation \eqref{BE}. We assume that there exist numbers  $k^{e/o} \geq 0$, $k_{t}^{e/o}\geq 0$ and $k_{x}^{e/o}\geq 0$ such that
\begin{gather*}
f^{e/o}\in \SbHermite{k^{e/o}}{0},\hsp
\left(\pd_{t}f\right)^{e/o}\in \SbHermite{k_{t}^{e/o}}{0}, \\
\left(\pd_{x_i}f\right)^{e/o}\in \SbHermite{k_{x}^{e/o}}{0},\hspB \forall \hsp i\in\{1,\dots,d\}.
\end{gather*}
Above, $(.)^e$ and $(.)^o$ denote the even and odd parts (of the various quantities) defined with respect to $\xi_1$ i.e. $$f^o(\xi_1,\xi_2,\xi_3) = \frac{1}{2}\left(f(\xi_1,\xi_2,\xi_3) - f(-\xi_1,\xi_2,\xi_3)\right),\hsp f^e(\xi_1,\xi_2,\xi_3) = \frac{1}{2}\left(f(\xi_1,\xi_2,\xi_3) + f(-\xi_1,\xi_2,\xi_3)\right).$$
\end{assumption}
\noindent

Note that for simplicity we have assumed the same degree of regularity for all spatial derivatives. Extending the forthcoming results to cases where different spatial derivatives have different degrees of regularity is straightforward.

To understand the relation between a standard Sobolev space and the Hermite-Sobolev space, we recall the following result \cite{HermiteSobolev} (see Theorem 2.1) 
$$W_H^k(\mbb R^d)\subseteq H^{2k}(\mbb R^d)\subseteq L^2(\mbb R^d),\hspB \forall\hsp k \geq 0,$$ 
where $H^k(\mbb R^d)$ represents a standard Sobolev space and the last inclusion results from its definition.
Above relation and the assumption in \autoref{regularity assumption} trivially implies that the space-time gradient of $f$ (i.e. $\nabla_{t,x}f$) is in $L^2(D;\mbb R^{d+1})$ which further leads to 
\begin{gather}
f\in L^2(\mbb R^d; H^1(\Omega)) \cap \OpSp. \label{H1 regularity}
\end{gather}

Later, during the convergence analysis error terms will appear along the boundary ($\bc\times (0,T)$) involving the moments of the traces of $f$, i.e. $\mom{m}(\gamma f)$, and due to \autoref{regularity assumption} these error terms are well-defined. Indeed, $\mom{m}(\gamma f)$ is an element of $H^{\frac{1}{2}}(\bc\times (0,T);\mbb R^{n(m)})$. Note that for strong solutions, the moments of the traces are not necessarily well-defined. The fact that $\gamma f \in  L^2(\mbb R^d;H^{\frac{1}{2}}(\bc\times (0,T)))$ is required by our analysis is the reason why we assume the boundary data ($\fIn$ in \eqref{BE}) to be in $L^2(\Sigma^-;|\xi_1|)\cap L^2(\mbb R^-\times\mbb R^{d-1};H^{1/2}(\bc\times (0,T)))$, since for compatibility we want $\gamma^- f = f_{in}$ on $\Sigma^-$.

\subsection{Moment Approximation}
\subsubsection{Even and Odd basis functions: }
To formulate boundary conditions for our moment approximation (discussed next), we first need the notion of even and odd moments.
\begin{definition} \label{def muO}
Let $\numOdd(m)$ and $\numEven(m)$ denote the total number of tensorial Hermite polynomials in $\basis{m}(\xi)$ which are odd and even, with respect to $\xi_1$, respectively.
Similarly, let $\basisO{m}(\xi)\in\mbb R^{\numOdd(m)}$ and $\basisE{m}(\xi)\in\mbb R^{\numEven(m)}$ represent vectors containing those basis functions out of $\basis{m}(\xi)$ which are odd and even, with respect to $\xi_1$, respectively. Then, we define $\momO{m}:\spaceV \to\mbb R^{\numOdd(m)}$ and $\momE{m}:\spaceV\to\mbb R^{\numEven(m)}$ as:
$
\momO{m}(r) = \lan \basisO{m}\sqrt{f_0},r\ran_{\spaceV}$ and  $\momE{m}(r) = \lan \basisE{m}\sqrt{f_0},r\ran_{\spaceV}$ where $r\in \spaceV$.
To collect all the odd and even moments of $r$ which have a degree less than or equal to $M$ ($m\leq M$), we define 
\begin{gather*}
\CollectBasisO{M}(\xi) = \left(\basisO{1}(\xi)',\basisO{2}(\xi)',\dots \basisO{M}(\xi)'\right)',\quad \CollectBasisE{M}(\xi) = \left(\basisE{0}(\xi)',\basisE{1}(\xi)',\dots \basisE{M}(\xi)'\right)',\\
\CollectMomO{M}(r) = \left(\momO{1}(r)',\momO{2}(r)',\dots \momO{M}(r)'\right)',\quad \CollectMomE{M}(r) = \left(\momE{0}(r)',\momE{1}(r)',\dots \momE{M}(r)'\right)',
\end{gather*}
where
$\CollectMomO{M}:\spaceV\to\mbb R^{\numOddTot{M}}$, $\CollectMomE{M}:\spaceV\to\mbb R^{\numEvenTot{M}}$, $\CollectBasisO{M}(\xi)\in \mbb R^{\numOddTot{M}}$ and  $\CollectBasisE{M}(\xi)\in \mbb R^{\numEvenTot{M}}$. We represent the total number of odd and even moments of degree less than or equal to $M$ through 
$
\Xi_o^M = \sum_{i=1}^M n_o(i)$ and $\Xi_e^M = \sum_{i=0}^M n_e(i)
$ respectively.
\end{definition}
\noindent
Expressions for boundary conditions become compact if we define the following matrices.
\begin{definition} \label{def Aoe}
We define 
\begin{gather*}
 \MatFluxSm{p}{r} = \lan\CollectBasisO{p}\xi_1\sqrt{f_0},\left(\basisE{r}\right)^' \sqrt{f_0}\ran_{\spaceV},\hspB
\MatFluxBi{p}{q} = \left(\MatFluxSm{p}{1},\MatFluxSm{p}{2},\dots ,\MatFluxSm{p}{q}\right).
\end{gather*}
We interpret $\lan\CollectBasisO{p}\xi_1\sqrt{f_0},\left(\basisE{r}\right)^' \sqrt{f_0}\ran_{\spaceV}$ as a matrix whose elements contain $L^2(\mbb R^d)$ inner product between different elements of vectors $\CollectBasisO{p}\sqrt{f_0}$ and $\xi_1\basisE{r}\sqrt{f_0}$. Therefore, $\MatFluxSm{p}{r}$ is a matrix with real entries of dimension $\numOddTot{p}\times\numEven(r)$. Moreover by definition, $\MatFluxSm{p}{r}$ are the different groups of columns of $ \MatFluxBi{p}{q}$ for $r\in \{1,\dots,q\}$. 
\end{definition}
\noindent
Recall that both $\CollectBasisE{q}(\xi)$ and $\basisE{q}(\xi)$ are vectors but $\CollectBasisE{q}(\xi)$ contains all those basis functions that have a degree less than or equal to $q$ whereas, $\basisE{q}(\xi)$ contains basis function of degree equal to $q$. Similar to the above matrices, we define the following matrices, which also contain the inner products between Hermite polynomials but on a half velocity space.
\begin{definition} \label{def B}
We define 
\begin{gather*}
\MatHalfSm{p}{r}= 2\lan {\CollectBasisO{p}\sqrt{f_0},\left(\basisE{r}\right)^'\sqrt{f_0}}\ran_{L^2(\mbb R^+\times \mbb R^{d-1})},\hspB
\MatHalfBi{p}{q} = \left(\MatHalfSm{p}{1},\MatHalfSm{p}{2},\dots, \MatHalfSm{p}{q}\right),
\end{gather*}
where $\MatHalfSm{p}{r}\in\mbb R^{\numOddTot{p}\times\numEven(r)}$. Similar to $\MatFluxSm{p}{r}$ defined above, $ \MatHalfSm{p}{r}\in\mbb R^{\numOddTot{p}\times\numEven(r)}$ are the different groups of columns of $ \MatHalfBi{p}{q}$ for $r\in \{1,\dots ,q\}$. 
\end{definition}

\subsubsection{Test and Trial Space: }
To approximate the strong solution (see \autoref{def strong sol}) to our kinetic equation \eqref{BE}, we use a Petrov-Galerkin type approach where we approximate the velocity dependence in the test space (i.e. $L^2(D)$) and in the solution space (i.e. $\OpSp$) through a finite Hermite series expansion \eqref{def psi}. 
Indeed, for our Petrov-Galerkin approach, we choose the following test ($X_M$) and the solution space ($H_M$)
\begin{equation}
\begin{gathered}
\left(L^2(\mbb R^d; H^1(V)) \cap \OpSp\right) \supset H_M:= \{\alpha\cdot \CollectBasis{M}\sqrt{f_0} \hsp : \hsp \alpha \in H^1(V;\mbb R^{\numTot{M}})\},\\ L^2(D)\supset X_M:= \{\alpha\cdot \CollectBasis{M}\sqrt{f_0} \hsp : \hsp \alpha \in L^2(V;\mbb R^{\numTot{M}})\}, \label{solsp testsp}
\end{gathered}
\end{equation}
where $\CollectBasis{M}$ is a vector containing all the Hermite polynomials up to a degree $M$, see  \autoref{def moments}. 
Since $\alpha\in H^1(V;\mbb R^{\numTot{M}})$, trivially, $H_M$ is a subset of $L^2(\mbb R^d; H^1(V))$, 
which means that our Galerkin method is conforming.
However, the fact that $H_M\subset \OpSp$ is not obvious and we prove it in the following result.
\begin{lemma}
Let $H_M$ be as defined in \eqref{solsp testsp} then, $H_M\subset \OpSp$.
\end{lemma}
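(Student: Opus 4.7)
The plan is straightforward: pick an arbitrary $v \in H_M$, written as $v = \alpha\cdot\CollectBasis{M}\sqrt{f_0}$ with $\alpha\in H^1(V;\mbb R^{\numTot{M}})$, and verify directly the two conditions in the definition of $\OpSp$, namely $v\in L^2(D)$ and $\mcal L v \in L^2(D)$.

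For the first condition, I would use Fubini and the orthonormality relation \eqref{orthonormality}. For almost every $(t,x)\in V$ we obtain $\|v(t,x,\cdot)\|^2_{\spaceV} = |\alpha(t,x)|^2$, and integrating over $V$ gives $\|v\|_{L^2(D)} = \|\alpha\|_{L^2(V)}$, which is finite because $H^1(V)\hookrightarrow L^2(V)$.

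For the second condition, I would treat the three summands in $\mcal L = \pd_t + \sum_i \xi_i \pd_{x_i} - Q$ separately. Since $\alpha \in H^1(V;\mbb R^{\numTot{M}})$, the weak derivatives $\pd_t\alpha$ and $\pd_{x_i}\alpha$ lie in $L^2(V;\mbb R^{\numTot{M}})$, so by the same orthonormality argument $\pd_t v = (\pd_t\alpha)\cdot\CollectBasis{M}\sqrt{f_0}$ is in $L^2(D)$. For the transport terms, the only potential issue is that multiplication by $\xi_i$ raises the polynomial degree: using the one-dimensional recursion \eqref{recursion} tensorially, each entry of $\xi_i \CollectBasis{M}\sqrt{f_0}$ is a finite linear combination of Hermite basis functions of degree at most $M+1$ times $\sqrt{f_0}$, hence lies in $\spaceV$ with an $L^2(\mbb R^d)$ norm bounded by a constant $C_M$ depending only on $M$ and $d$. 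Therefore
\begin{equation*}
\|\xi_i \pd_{x_i}v\|_{L^2(D)} \le C_M \|\pd_{x_i}\alpha\|_{L^2(V;\mbb R^{\numTot{M}})} < \infty .
\end{equation*}
Finally, for the collision term, \autoref{assumption Q}(ii) gives a bound $\|Q r\|_{\spaceV} \leq \|Q\|\,\|r\|_{\spaceV}$ for every $r\in\spaceV$. Applied pointwise in $(t,x)$ with $r = v(t,x,\cdot)$, and using Fubini once more together with the $L^2(D)$ bound on $v$ already established, this yields $\|Qv\|_{L^2(D)} \le \|Q\|\,\|\alpha\|_{L^2(V)} < \infty$.

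Adding these three bounds via the triangle inequality shows $\mcal L v \in L^2(D)$, which together with $v\in L^2(D)$ gives $v \in \OpSp$. No step really looks difficult; the only place one has to be a bit careful is the transport term, where one must remember that $\xi_i$ applied to an element of $H_M$ produces basis functions of degree $M+1$ which are not in $H_M$ but remain in $L^2(\mbb R^d)$, so the computation works at the level of $\OpSp$ without needing $H_M$ to be invariant under multiplication by $\xi_i$.
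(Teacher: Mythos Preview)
Your proof is correct and follows essentially the same approach as the paper: reduce to checking each summand of $\mcal L v$ lies in $L^2(D)$, with the only nontrivial term being the transport part $\xi_i\pd_{x_i}v$. The paper handles that term by writing $\|\xi_1\pd_{x_1}f\|_{L^2(D)}^2$ as a quadratic form $(\pd_{x_1}\alpha)'A\,\pd_{x_1}\alpha$ with $A=\lan\CollectBasis{M}\sqrt{f_0},\xi_1^2\CollectBasis{M}\sqrt{f_0}\ran_{\spaceV}$ having bounded entries, whereas you invoke the recursion \eqref{recursion} to see that $\xi_i\CollectBasis{M}\sqrt{f_0}$ lands in a fixed finite-dimensional subspace of $\spaceV$; these are two phrasings of the same estimate.
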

\begin{proof}
Let $f\in H_M$. To prove our claim we need to show that $\DiffOp f\in L^2(D)$ for which we only need to show that $\xi\cdot \nabla_x f\in \spaceXVT$; definition of $H_M$ and boundedness of $Q$ on $L^2(\mbb R^d)$ already implies that $\pd_t f\in L^2(D)$ and $Q(f)\in L^2(D)$. We show that $\xi\cdot \nabla_x f\in \spaceXVT$ by proving that $\xi _i\pd_{x_i}f\in L^2(D)$ for all $i\in \{1,\dots ,d\}$. For brevity we consider $i=1$, for other values of $i$ result follows analogously. Computing $\|\xi_1 \pd_{x_1} f\|_{L^2(D)}^2$ by expressing $f$ as $f = \alpha \cdot \CollectBasis{M}\sqrt{f_0}$, we find 
\begin{equation*}
\begin{aligned}
\|\xi_1 \pd_{x_1} f\|_{L^2(D)}^2 = \|\left(\pd_{x_1}\alpha\right)^{'} A\pd_{x_1}\alpha\|_{L^2(V)} \leq C\|\pd_{x_1}\alpha\|^2_{L^2(V;\mbb R^{\numTot{M}})} <\infty,
\end{aligned}
\end{equation*}
where $A = \lan\CollectBasis{M}\sqrt{f_0},\xi_1^2 \CollectBasis{M}\sqrt{f_0}\ran_{\spaceV}$. Above, the first inequality is a result of each entry of $A$ being bounded and the last inequality is a result of $\alpha\in H^1(V;\mbb R^{\numTot{M}})$.
\end{proof}

\begin{remark}
Note that for the BGK and the Boltzmann collision operator (given in \autoref{an example}), $\sqrt{f_0}$ is the global equilibrium. Therefore, for both of these operators, an approximation in $H_M$ (given in \eqref{solsp testsp}) is equivalent to expanding around the global equilibrium. This ensures that there exists a finite $M$ such that
\begin{gather}
\opn{ker}(Q) \subseteq \opn{span}\{\basis{\beta^{(i)}}\sqrt{f_0}\}_{\|\beta^{(i)}\|_{l^1}=1,\dots,M}. \label{inclusion kernel}
\end{gather}
The equilibrium state of the kinetic equation belongs to $\opn{ker}(Q)$ and the above conditions allows one to compute the same numerically. Note that for the linearised Boltzmann and the BGK operator, the above condition holds for $M=2$ \cite{Carlos}.

Collision operators of practical relevance known to us have $\sqrt{f_0}$ (or $f_0$ depending on the scaling) as their global equilibrium. If the global equilibrium is different from $f_0$, say $\hat f_0$, then an expansion around $\hat f_0$ results in an approximation space different from $H_M$. If this approximation space has basis functions that satisfy the property of recursion \eqref{recursion}, orthogonality \eqref{orthonormality}, totality in $L^2(\mbb R^d)$, even/odd parity (given in \autoref{def muO}), etc., then we expect to have results similar to what we propose here. Considering a different approximation space is out of scope of the present work. 
\end{remark}

\subsubsection{Variational Formulation: }To develop our Galerkin approximation, in the definition of the strong solution (given in \autoref{def strong sol}), we restrict the test space and the trial space to $X_M$ and $H_M$, respectively. This provides
\begin{subequations}
\begin{align}
&\textit{Find $f_M\in H_M$ such that} \nonumber\\
&\lan v,\mcal Lf_M \ran_{\spaceXVT}  = 0,\hsp\forall\hsp v\in X_M,\hspB \CollectMom{M}(f_M(0)) = \CollectMom{M}(f_I)\hsp \text{on} \hsp \Omega,\label{petrov galerkin}\\
& 
\CollectMomO{M}(\gamma f_M)  = R^{(M)}\MatFluxBi{M}{M}  \CollectMomE{M}(\gamma f_M) + \bcInhomo(\fIn)\hsp\text{on}\hsp (0,T)\times\bc, \label{OBC}
\end{align}
\end{subequations}
where $R^{(M)} \in\mbb R^{\numOddTot{M}\times\mbb R^{\numOddTot{M}}}$ is a s.p.d matrix given as \cite{Sarna2017}
\begin{gather}
R^{(M)} = \MatHalfBi{M}{M-1}\left(\MatFluxBi{M}{M-1}\right)^{-1}. \label{Onsager Mat}
\end{gather}
Invertibility of the matrix $\MatFluxBi{M}{M-1}$ follows from the recursion relation \eqref{recursion} and is discussed in detail in appendix-\autoref{properties Aoe}. Moreover, $\bcInhomo:L^2(\mbb R^-\times\mbb R^{d-1})\to\mbb R^{\numOddTot{M}}$ is defined as: $\bcInhomo(\fIn) := \lan\CollectBasisO{M},\fIn\ran_{L^2(\mbb R^-\times\mbb R^{d-1})}$. Thus, $\bcInhomo(\fIn)$ is a vector containing all the half-space odd moments of $\fIn$.
The variational form in \eqref{petrov galerkin} and its initial condition follow trivially from the definition of a strong solution given in \autoref{def strong sol}. However, the derivation of boundary conditions \eqref{OBC} is more involved and one can find details of this derivation in \cite{Rana2016,Sarna2017,Sarna2018}. For brevity, we refrain from discussing these details here.

The Galerkin formulation \eqref{petrov galerkin} is $L^2$-stable and its stability results from the specific form of the boundary conditions given in \eqref{OBC}. Since stability will be crucial for developing error bounds, we present a brief derivation of the stability estimate. We choose $v$ as $f_M$ in \eqref{petrov galerkin}, consider (for simplicity) $\fIn = 0$, use the negative semi-definiteness of $Q$ and perform integration-by-parts on the space-time derivatives to find
 \begin{equation}
\begin{aligned}
\|f_M(T)\|^2_{\spaceXV}-\|f_M(0)\|^2_{\spaceXV}  \leq  &- 2\lan\CollectMomO{M}(\gamma f_M),\MatFluxBi{M}{M}\CollectMomE{M}(\gamma f_M)\ran_{L^2((0,T)\times\bc;\mbb R^{\numOddTot{M}})}\\
=& - 2\lan \MatFluxBi{M}{M}\CollectMomE{M}(\gamma f_M),R^{(M)}\MatFluxBi{M}{M}\CollectMomE{M}(\gamma f_M)\ran_{L^2((0,T)\times\bc;\mbb R^{\numOddTot{M}})}\\
\leq &0,\label{stability fM}
\end{aligned}
\end{equation}
where the last inequality is a result of $\OnsagerMat{M}$ being s.p.d and all the boundary integrals are well-defined because $\CollectMom{M}(\gamma f_M) \in L^2(V;\mbb R^{\numTot{M}})$, which is a result of our definition of $H_M$ given in \eqref{solsp testsp}. Moreover, the integral on the boundary involving $\MatFluxBi{M}{M}$ results from the following, which results from the orthogonality of even and odd Hermite polynomials
\begin{equation}
\begin{aligned}
\int_{\mbb R^d}\xi_1 (\gamma f_M)^ 2 d\xi = &2\int_{\mbb R^d}\xi_1 (\gamma f_M)^o(\gamma f_M)^ed\xi \\
= &2\int_{\mbb R^d} \left(\CollectMomO{M}(\gamma f_M)\cdot \CollectBasisO{M}(\xi)\sqrt{f_0}\right) \xi_1 \left(\CollectBasisE{M}(\xi)\cdot \CollectMomE{M}(\gamma f_M)\sqrt{f_0}\right)d\xi \\ 
 =&2\lan \CollectMomO{M}(\gamma f_M),\MatFluxBi{M}{M}\CollectMomO{M}(\gamma f_M)\ran_{\mbb R^{\numOddTot{M}}}. \label{even odd ortho}
\end{aligned}
\end{equation}
\begin{remark}
The variational form in \eqref{petrov galerkin} is the same that leads to the Grad's moment equations \cite{Grad1949}. However, through \eqref{petrov galerkin}, we only recover the so-called \textit{full moment} approximations \cite{CaiNRXX,Torrilhon2015}.
\end{remark}

\begin{remark}\label{explain diff MBC OBC}
Grad \cite{Grad1949} prescribes boundary conditions through 
$
\CollectMomO{M}(\gamma f_M)  = \MatHalfBi{M}{M}  \CollectMomE{M}(\gamma f_M) + \bcInhomo(\fIn)
$ but they lead to $L^2$-instabilities \cite{Sarna2018,Rana2016}. To see the difference between Grad's boundary conditions and those which lead to stability \eqref{OBC}, we use the expression for $\OnsagerMat{M}$ from \eqref{Onsager Mat} and subtract the boundary matrix in \eqref{OBC} with the Grad's boundary matrix to find
\begin{align}
R^{(M)}\MatFluxBi{M}{M} - \MatHalfBi{M}{M} = \left(0, \left[R^{(M)}\MatFluxSm{M}{M}-\MatHalfSm{M}{M}\right]\right). \label{diff MBC OBC}
\end{align}
The above relation implies that the two boundary conditions differ only in terms of the highest order even moments of $f_M$ i.e. through $\momE{M}(f_M(t,x,.))$. This difference will show up in the convergence analysis and will influence the convergence order of our moment approximation.
\end{remark}

\begin{remark}
In \cite{Egger2016}, authors consider an IBVP for the radiative transport equation and develop a $L^2$-stable moment approximation  for the same. 
Comparing our approach to that proposed in \cite{Egger2016} is ongoing research and we hope to cater to it in the future. The framework proposed in \cite{Egger2016} considers a bounded velocity domain, which does not have a radial direction. Therefore, the first step is to extend this framework to an unbounded velocity domain, and then to compare it to ours.
\end{remark}

\section{Convergence Analysis}
We outline the forthcoming convergence analysis in the following steps. 
\begin{enumerate}[label=(\roman*)]
\item  \textit{Define a Projection Operator:} we define a projection operator $\projBC{M}:L^2(\mbb R^d;H^1(V))\to H_M$ (with $H_M$ as defined in \eqref{solsp testsp}) such that the trace of the projection satisfies the same type of boundary conditions as those satisfied by the moment approximation \eqref{OBC}. Such a projection operator helps us exploit the stability of the moment approximation \eqref{stability fM} during error analysis.
\item \textit{Decompose the error:} we decompose the moment approximation error into two parts
\begin{align}
E_M = f - f_M = \underbrace{f-\projBC{M}f}_{\projError{M}} + \underbrace{\projBC{M}f-f_M}_{e_M} \label{split error}.
\end{align}
Above, $e_M$ is the error in moments (or the expansion coefficients) and $\projError{M}$ is the projection error.

\item \textit{Bound for the projection error:} 
we derive a bound for $\|\projError{M}\|_{\spaceXVT}$ in terms of the moments of the solution, and using our regularity assumption (see \autoref{regularity assumption}) we show that $\|\projError{M}\|_{\spaceXVT}\to 0$ as $M\to\infty$.

\item \textit{Bound for the error in moments:} 
Using stability of our moment approximation \eqref{stability fM}, we bound $\|e_M\|_{\spaceXVT}$ in terms of $\|\DiffOp\projError{M}\|_{\spaceXVT}$, where $\mcal L$ is the projection operator. We complete the analysis by showing that $\|\DiffOp\projError{M}\|_{\spaceXVT}\to 0$ as $M\to\infty$. 
\end{enumerate}

\subsection{The Projection Operator}
We sketch our formulation of the projection operator $\projBC{M}:L^2(\mbb R^d;H^1(V))\to H_M$. Let $r\in L^2(\mbb R^d;H^1(V))$. We represent the projection $\projBC{M}r$ generically through $\projBC{M}r = \left(\hatCollectMomO{M}(r)\cdot \CollectBasisO{M} +\hat{\Lambda}_M^e(r)\cdot\CollectBasisE{M}(r) \right)\sqrt{f_0}$ where $\hatCollectMomO{M}$ and $\hat{\Lambda}_M^e$ are linear functionals defined over $L^2(\mbb R^d)$. For now assume that $\projBC{M}r \in H_M$ and that the trace of the projection (i.e. $\gamma \projBC{M}r$) is such that $\gamma(\projBC{M}r)  = \left(\hatCollectMomO{M}(\gamma r)\cdot \CollectBasisO{M} +\hat{\Lambda}_M^e(\gamma r)\cdot\CollectBasisE{M} \right)\sqrt{f_0}$. Once we define $\hatCollectMomO{M}$ and $\hat{\Lambda}_M^e$, it will be trivial that both of these assumptions are satisfied. As mentioned earlier, we want $\gamma(\projBC{M}r)$ to satisfy moment approximation's boundary conditions \eqref{OBC}. Since these boundary conditions have no restriction over the even moments, we choose $\hat{\Lambda}_M^e(r)$ to be the same as the even moments of $r$ i.e. we choose $\hat{\Lambda}_M^e(r)=\CollectMomE{M}(r)$. However, coefficients of the odd basis functions are constrained by moment approximation's boundary conditions \eqref{OBC} and thus we choose them as $\hatCollectMomO{M}(r) = \OnsagerMat{M}\MatFluxBi{M}{M}\CollectMomE{M}(r) + \bcInhomo(r)$. Such a choice of $\hatCollectMomO{M}(r)$ ensures that, provided the inflow part of $r$ coincides with $\fIn$, we have $\hatCollectMomO{M}(\gamma r) = \OnsagerMat{M}\MatFluxBi{M}{M}\CollectMomE{M}(\gamma r) + \bcInhomo(\fIn)$  along the boundary, i.e. the projection satisfies the boundary conditions of the moment approximation \eqref{OBC}. In the following, we summarise our projection operator and, for convenience, we also define the orthogonal projection operator. 
\begin{definition} \label{def projector}
We define $\projBC{M}:L^2(\mbb R^d;H^1(V))\to H_M$ as 
\begin{equation*}
\begin{aligned}
r(\cdot) \mapsto \left(\hatCollectMomO{M}(r)\cdot \CollectBasisO{M}(\cdot) +\CollectMomE{M}(r)\cdot \CollectBasisE{M}(\cdot)\right) \sqrt{f_0(\cdot)}\hspB
\text{with}\hspB 
\hatCollectMomO{M}(r) := &R^{(M)}\MatFluxBi{M}{M} \CollectMomE{M}(r) + \bcInhomo(r). 
\end{aligned}
\end{equation*}
Similarly, with $X_M$ as given in \eqref{solsp testsp}, we define the orthogonal projection operator $\projOrtho{M}:L^2(D)\to X_M$ as 
\begin{gather*}
(\projOrtho{M}r) (\xi) = \left(\CollectMomO{M}(r)\cdot \CollectBasisO{M}(\xi) + \CollectMomE{M}(r)\cdot \CollectBasisE{M}(\xi)\right)\sqrt{f_0(\xi)},\hspB r\in \spaceXVT.
\end{gather*}
\end{definition}

\begin{remark}\label{initial conditions}
In \eqref{petrov galerkin}, we prescribe the initial conditions using the orthogonal projection operator, but there is no unique way of doing so. Our convergence analysis covers all projection or interpolation operators which introduce errors that decay at least as fast as the moment approximation error ($E_M$). Upcoming convergence analysis will clarify the fact that both $\projBC{M}$ and $\projOrtho{M}$ satisfy these criteria.
Therefore, for simplification, we prescribe the initial conditions through $f_M(0) = \projBC{M}f_I$, which ensures that $e_M(0) = 0$.
Note that implementing $\projBC{M}$ is cumbersome and therefore for implementation, one might want to prescribe initial conditions using $\projOrtho{M}$ or some other (easier to implement) interpolation.
\end{remark}

\begin{remark}
Due to our definition of the projection operator $\projBC{M}$, the projection error $\projError{M}$ (defined in \eqref{split error}) is not orthogonal to the approximation space $H_M$. This is in contrast to the analysis in \cite{ConvergenceMoments, Martin} where the use of an orthogonal projection operator leads to a $\projError{M}$ that is orthogonal to the approximation space.
\end{remark}

\subsection{Extension to spatial domains with $C^2$ boundaries: } \label{extension C2}
Velocity perpendicular to our spatial domain's boundary is $\xi_1$ and we have defined the projection operator ($\projBC{M}$) with respect to this velocity, this is implicit in the definition of the operators $\bcInhomo$ and $\MatFluxBi{M}{M}$. Since for the half-space ($\Omega=\mbb R^-\times\mbb R^{d-1}$) the boundary normal is the same at every boundary point, the definition of the projection operator remains the same for all boundary points. However, for a spatial domain other than the half-space, the normal along the boundary varies which results in different boundary points having different projection operators. We briefly discuss a methodology to construct the projection operators for a $C^2$-domain, which can have a normal that varies along the boundary. 

Let $\Omega\subset \mbb R^d$ be a domain with a $C^2$ boundary. Then, for every point $x_0 \in \bc$ we can define a line which passes through $x_0$ and points towards the interior of the domain in the direction opposite to the normal at $x_0$ ($n(x_0)$): 
$
L_{x_0} := \{ x\in\Omega : x-x_0 = \alpha n(x_0),\alpha\in\mbb R^-\}.
$
Since the boundary is $C^2$, there exists some $\delta > 0$ such that 
$\Omega_{\delta} := \{x\in\Omega : \opn{dist}(x,\partial\Omega) \geq \delta \}$
has the property that no two lines $L_{x_0}$ and $L_{x_1}$, for any $x_0,x_1\in\pd \Omega$, intersect within $\Omega^c_{\delta}$.

Inside $\Omega_{\delta}$ we use the orthogonal projection $\projOrtho{M}$ whereas outside of $\Omega_{\delta}$ we proceed as follows. For every $x\in\Omega_{\delta}^c$ (by definition of $\Omega_{\delta}$) there exists a unique $x_0$ such that $x\in L_{x_0}$. Let $\projBC{M}^{x_0}$ denote the projection operator accounting for the boundary conditions at $x_0$. Then at $x$ we define the projection operator to be the linear combination of the projection operator which satisfies the boundary conditions, $\projBC{M}^{x_0}$, and the orthogonal projection operator $\projOrtho{M}$
$$ \projBC{M}^{x} := \left(1 - \frac{|x-x_0|}{\delta}\right)\projBC{M}^{x_0} + \frac{|x-x_0|}{\delta}\projOrtho{M}.$$
In this way, $x\mapsto \projBC{M}^x(f_M(.,x,.))$ satisfies the desired boundary conditions and is $C^1$.

\begin{remark}
We emphasize that the projection operator defined in \autoref{def projector} is an analytical tool defined such that the projection satisfies the same boundary conditions as those satisfied by the moment approximation. It is nowhere needed for computing the moment approximation. This is also clear from the variational formulation given in \eqref{petrov galerkin}, where we set to zero the orthogonal projection of the evolution equation onto the approximation space. 

\end{remark}

\subsection{Main Result}
In the following, we summarise our main convergence result. 
\begin{theorem} \label{main result}
We can bound the error in the moment approximation, $E_M = f - f_M$, as 
\begin{equation}
\begin{aligned}
\|E_M(T)\|_{\spaceXV} \leq \|f(T) - \projBC{M} f(T)\|_{\spaceXV}  + T
\left(A_1(T) +  \|Q\| A_2(T) + A_3(T)\right) \label{error bound main}
\end{aligned}
\end{equation}
where
\begin{subequations}
\begin{align}
A_1(T) = &\left(\diffBC{M}\|\momE{M}(\pd_{t} f)\|_{C^0([0,T];L^2(\Omega;\mbb R^{\numEven(M)}))}\right. \nonumber\\ 
&\left. + \sqrt{2}\sum_{\beta\in \{e,o\}}\frac{1}{(2(M+1)+d)^{k_t^{\beta}}}\|\left(\pd_{t} f\right)^o\|_{C^0([0,T];L^2(\Omega;W_H^{k_t^{\beta}}(\mbb R^d)))} \right),\label{def A1}\\
A_2(T) = &\left(\diffBC{M}\|\momE{M}(f)\|_{C^0([0,T];L^2(\Omega;\mbb R^{\numEven(M)}))}\right. \nonumber\\
&\left. + \sqrt{2}\sum_{\beta\in \{e,o\}}\frac{1}{(2(M+1)+d)^{k^{\beta}}}\|f^{\beta}\|_{C^0([0,T];L^2(\Omega;W_H^{k^{\beta}}(\mbb R^d)))}\right), \label{def A2}\\ 
A_3(T) = &\sum_{i=1}^d\left(\diffBC{M}\|\MatFluxBi{M}{M}\|_2\|\momE{M}(\pd_{x_i} f)\|_{C^0([0,T];L^2(\Omega;\mbb R^{\numEven(M)}))}\right.\nonumber\\
&\left. + \sqrt{(M+1)}\|\mom{M+1}(\pd_{x_i}f)\|_{C^0([0,T];L^2(\Omega;\mbb R^{n(M+1)}))}\right) \nonumber\\
&+ \frac{\|\MatFluxBi{M}{M}\|_2}{(2(M+1)+d)^{k_x^e}}\sum_{i=1}^d\|\left(\pd_{x_i}f\right)^e\|_{C^0([0,T];L^2(\Omega;W_H^{k_x^e}(\mbb R^d)))},\label{def A3}\\
\diffBC{M} =  &\|R^{(M)}\MatFluxSm{M}{M}-\MatHalfSm{M}{M}\|_2. \label{def diffBC}
\end{align}
\end{subequations}
As $M\to \infty$, we have the convergence rate
\begin{equation}
\begin{gathered}
\|E_M(T)\|_{\spaceXV} \leq \frac{C}{M^{\omega}},\hspB \omega = \min\left\{k^{e/o}-\frac{1}{2}, k_t^{e/o}-\frac{1}{2},k_{x}^e-1,k_{x}^o-\frac{1}{2}\right\}.\label{convg rate}
\end{gathered}
\end{equation}
\end{theorem}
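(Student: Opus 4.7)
I would follow the four-step outline stated just before the theorem. Write $E_M = \projError{M} + e_M$ with $\projError{M} = f - \projBC{M}f$ and $e_M = \projBC{M}f - f_M$, and use the triangle inequality: the first piece contributes $\|\projError{M}(T)\|_{\spaceXV}$ to the bound directly, and the second is handled by an energy estimate. Since $f$ is a strong solution, $\mcal L f = 0$ in $\spaceXVT$, and since $f_M$ satisfies \eqref{petrov galerkin}, subtraction yields the Galerkin-type identity
\begin{equation*}
\lan v, \mcal L e_M\ran_{\spaceXVT} = -\lan v, \mcal L \projError{M}\ran_{\spaceXVT},\hspB \forall\, v\in X_M.
\end{equation*}

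\textbf{Energy estimate for $e_M$.} Because $e_M \in H_M \subset X_M$, I would test this identity on the sub-cylinder $(0,s)\times\Omega\times\mbb R^d$ with $v = e_M$. Integration by parts in $t$ and $x_1$, mimicking the derivation of \eqref{stability fM}--\eqref{even odd ortho}, produces the terminal term $\tfrac12\|e_M(s)\|^2_{\spaceXV}$, the initial term (which vanishes after choosing $f_M(0) = \projBC{M}f_I$ so that $e_M(0)=0$, see \autoref{initial conditions}), the non-negative collision contribution $-\lan e_M,Qe_M\ran \geq 0$ by \autoref{assumption Q}, and a boundary flux $\int_0^s\!\int_\bc\lan \CollectMomO{M}(\gamma e_M),\MatFluxBi{M}{M}\CollectMomE{M}(\gamma e_M)\ran$. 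By construction, both $\projBC{M}f$ and $f_M$ satisfy \eqref{OBC} with the same inhomogeneity $\bcInhomo(\fIn)$, so $e_M$ obeys the homogeneous stable BC, and this boundary flux reduces to $\int_0^s\!\int_\bc \lan \MatFluxBi{M}{M}\CollectMomE{M}(\gamma e_M),\OnsagerMat{M}\MatFluxBi{M}{M}\CollectMomE{M}(\gamma e_M)\ran \geq 0$ because $\OnsagerMat{M}$ is s.p.d. Thus $\|e_M(s)\|^2_{\spaceXV} \leq 2\int_0^s \|e_M(t)\|_{\spaceXV}\,\|\mcal L\projError{M}(t)\|_{\spaceXV}\,dt$, and a Bihari/Gronwall argument gives $\|e_M(T)\|_{\spaceXV} \leq T\,\|\mcal L\projError{M}\|_{C^0([0,T];\spaceXV)}$.

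\textbf{Bounding $\mcal L\projError{M}$ via the projection error.} I would split $\mcal L\projError{M} = \pd_t\projError{M} + \xi\cdot\nabla_x\projError{M} - Q\projError{M}$. Since $\projBC{M}$ is defined pointwise in $(t,x)$, it commutes with $\pd_t$ and $\pd_{x_i}$, so each summand reduces to a scalar projection error of $f$, $\pd_t f$, or $\pd_{x_i}f$, with the transport term carrying the extra multiplication by $\xi_i$. For the scalar projection error of a generic $r$, decompose $r - \projBC{M}r = (r - \projOrtho{M}r) + (\projOrtho{M}r - \projBC{M}r)$. The first summand is the orthogonal Hermite tail and is bounded by Parseval and \autoref{regularity assumption} as $(2(M+1)+d)^{-k^{\bullet}}\|r^{\bullet}\|_{L^2(\Omega;W_H^{k^{\bullet}})}$ for $\bullet\in\{e,o\}$. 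The second summand lies in the approximation space, so its $\spaceV$-norm equals the Euclidean norm of $\CollectMomO{M}(r) - \hatCollectMomO{M}(r)$; writing $\CollectMomO{M}(r) = \MatHalfBi{M}{M}\CollectMomE{M}(r) + \bcInhomo(r) + (\text{odd Hermite tail})$ (Grad's identity on $\xi_1>0$) and invoking the structural cancellation \eqref{diff MBC OBC}, the correction reduces to $\diffBC{M}\|\momE{M}(r)\|$ plus a further tail. Collecting the cases $r\in\{f,\pd_t f\}$ reproduces the $A_2$ and $A_1$ parts of \eqref{error bound main}; the collision term contributes $\|Q\| A_2$ via $\|Q\projError{M}\|\leq \|Q\|\|\projError{M}\|$.

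\textbf{Transport term, convergence rate, and main obstacle.} The remaining $\xi_i\pd_{x_i}\projError{M}$ contribution is the most delicate: the recursion \eqref{recursion} couples Hermite degrees $m$ and $m\pm 1$ with weights $\sqrt{m},\sqrt{m+1}$, so multiplying by $\xi_i$ (i) inflates the correction term by a factor $\|\MatFluxBi{M}{M}\|_2$ and (ii) introduces a new degree-$(M+1)$ leak contribution $\sqrt{M+1}\,\|\mom{M+1}(\pd_{x_i}f)\|$ from coefficients just outside the approximation space being pulled back into degree $M$. These two effects are exactly what produces $A_3(T)$. For the asymptotic rate \eqref{convg rate}, I would plug the Hermite-Sobolev bound $\|\mom{m}(g)\|_{L^2(\Omega)}\leq (2m+d)^{-k}\|g\|_{L^2(\Omega;W_H^k)}$ into each term and use that $\|\MatFluxBi{M}{M}\|_2$ grows polynomially in $M$ (from the same recursion): the pure tails give $M^{-k^{\bullet}},M^{-k_t^{\bullet}}$, while the $\xi$-multiplied terms each lose one unit or half-unit of regularity, producing $M^{-(k_x^e-1)}$ and $M^{-(k_x^o-1/2)}$, whose minimum is $\omega$. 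The main obstacle, and the step most sensitive to the very definition of $\projBC{M}$, is precisely this analysis of $\xi\cdot\nabla_x\projError{M}$: one must track how multiplication by $\xi_i$ interacts with the stable-boundary correction in order to collapse the residual into the clean combination $\diffBC{M}\|\MatFluxBi{M}{M}\|_2\|\momE{M}(\pd_{x_i}f)\| + \sqrt{M+1}\|\mom{M+1}(\pd_{x_i}f)\|$ rather than something wilder. Everything else---commuting $\projBC{M}$ with partial derivatives, the Gronwall step, and the collision bound---is routine once the stability-compatible projection from \autoref{def projector} is in hand.
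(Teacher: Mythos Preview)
Your approach is essentially the paper's, and the key ideas---the stability-compatible projection, the homogeneous boundary condition for $e_M$, the add/subtract of $\projOrtho{M}\pd_{x_i}f$ in the transport term---are all in place. Two refinements are needed to land exactly on the stated bound and rate.

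First, in the energy inequality you should retain the orthogonal projection on the residual: since $e_M\in X_M$ one has $\lan e_M,\mcal L\projError{M}\ran_{\spaceXV}=\lan e_M,\projOrtho{M}\mcal L\projError{M}\ran_{\spaceXV}$, so the Gronwall step actually gives $\|e_M(T)\|_{\spaceXV}\le T\,\|\projOrtho{M}\mcal L\projError{M}\|_{C^0([0,T];\spaceXV)}$. Without that outer $\projOrtho{M}$, the piece $\xi_i(\pd_{x_i}f-\projOrtho{M}\pd_{x_i}f)$ carries the entire Hermite tail rather than the single closure term $\sqrt{M+1}\,\|\mom{M+1}(\pd_{x_i}f)\|$ that appears in $A_3$; your own phrase ``pulled back into degree $M$'' is precisely this $\projOrtho{M}$ acting, so you are implicitly using it but did not write it in the energy step.

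Second, your rate paragraph asserts that the ``pure tails'' in $A_1,A_2$ contribute $M^{-k^{\bullet}}$ and $M^{-k_t^{\bullet}}$. This overlooks that the boundary-correction piece $\diffBC{M}\|\momE{M}(\cdot)\|$ also sits inside $A_1$ and $A_2$, and $\diffBC{M}$ is not bounded but grows like $\sqrt{M}$ (the paper obtains this from $\|(\MatFluxBi{M}{M-1})^{-1}\MatFluxSm{M}{M}\|_2\le C\sqrt{M}$). Combined with $\|\momE{M}(r)\|\le C M^{-k^e}$, this term contributes $M^{-(k^e-1/2)}$, which is exactly what produces the $k^{e/o}-\tfrac12$ and $k_t^{e/o}-\tfrac12$ appearing in $\omega$; as written, your accounting would give a strictly better (and incorrect) rate for those indices.
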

\noindent
The motivation behind decomposing the right hand side into the different $A_i$'s is that each of these terms vanishes in different physical settings. The term $A_1$ vanishes for steady state problems i.e. for $\pd_t f = 0$, the term $A_2$ vanishes in the absence of collisions, and the term $A_3$ vanishes under spatial homogeneity i.e. for $\pd_{x_i} f = 0$.

An alternative way to understand the right hand side of the error bound given in \autoref{main result} is to identify the following four different types of errors:
\begin{enumerate}[label=(\roman*)]
\item \textit{Projection Error: }This is the first term appearing on the right side of the error bound in \eqref{error bound main} and is the $\projError{M}$ defined in \eqref{split error}.

\item \textit{Closure Error: }This is the second term appearing in $A_3(T)$ \eqref{def A3} and involves the $M+1$-th order moment of $\pd_{x_i}f$. The term accounts for the influence of the flux of the $M+1$-th order moment which was dropped out during the moment approximation.

\item \textit{Boundary Stabilisation Error: }These are all the terms involving $\diffBC{M}$ and are all the first terms appearing in \eqref{def A1}-\eqref{def A3}. These terms are a result of the difference between the boundary conditions proposed by Grad \cite{Grad1949} and those given in \eqref{OBC} which lead to a stable moment approximation; \autoref{explain diff MBC OBC} explains the difference between the two boundary conditions.
Since the two boundary conditions only differ in the coefficients of the highest order even moment (see \eqref{diff MBC OBC}), this error depends only upon this highest order even moment.

\item \textit{Boundary Truncation Error: }These are all the terms which are not included in the above definitions. They are a result of ignoring contributions from all those even (and odd) moments which have an order greater than $M$ and do not appear in the boundary conditions for the moment approximation \eqref{OBC}.
\end{enumerate}
We prove \autoref{main result} in the next few pages.
\subsection{Error Equation}
To derive a bound for the moment approximation error \\
(i.e. for $\|E_M(T)\|_{\spaceXV}$) we first derive a bound for the error in the expansion coefficients (i.e. for $\|e_M(T)\|_{\spaceXV}$) and then use triangle's inequality to arrive at a bound for $\|E_M(T)\|_{\spaceXV}$; see \eqref{split error} for definition of $E_M$ and $e_M$. In the following discussion we suppress dependencies on $x$ and $\xi$, for brevity. 

We start with adding and subtracting $\mcal L(\projBC{M}f)$ in the definition of a strong solution given in \autoref{def strong sol}. For all $\hsp v\in X_M,$ and for all  $t\in (0,T)$, considering the integral over $\Omega\times\mbb R^d$ provides
\begin{equation*}
\begin{aligned}
\lan v(t), \mcal L(\projBC{M}f(t)) \ran_{\spaceXV}  = &\lan v(t), \mcal L(\projBC{M}f(t)-f(t)) \ran_{\spaceXV}\hsp, \\
= &\lan v(t), \projOrtho{M}\mcal L(\projBC{M}f(t)-f(t)) \ran_{\spaceXV},
\end{aligned}
\end{equation*}
where $X_M\subset L^2(D)$ is as defined in \eqref{solsp testsp}. For the last equality we have used the trivial relation: $
\lan v(t),w(t)\ran_{\spaceXV} = \lan v(t),\projOrtho{M}w(t)\ran_{\spaceXV},\forall (v,w)\in X_M\times \spaceXVT.$ 
Subtracting the above relation from our moment approximation \eqref{petrov galerkin}, and using the linearity of $\mcal L$, we find
\begin{equation}
\begin{aligned}
\lan v(t),\mcal L(e_M(t)) \ran_{\spaceXV}  = \lan v(t),\projOrtho{M}\mcal L(f(t)-\projBC{M}f(t)) \ran_{\spaceXV}\hsp \forall\hsp v\in X_M,\hsp \forall\hsp t\in (0,T), \label{error equation}
\end{aligned}
\end{equation}
where $e_M$ is as given in \eqref{split error}. To derive a bound for $e_M$, we want to use the stability of our moment approximation \eqref{stability fM}. We do so by choosing $v(t) = e_M(t)$ in the above expression and by performing integration-by-parts on the spatial derivatives, which provides
\begin{multline}\label{aya}
\lan e_M(t),\pd_t e_M(t) \ran_{\spaceXV} - \lan e_M(t),Q e_M(t) \ran_{\spaceXV} \\ \leq  \lan e_M(t),\projOrtho{M}\mcal L(f(t)-\projBC{M}f(t)) \ran_{\spaceXV}-\underbrace{\oint_{\bc} \int_{\mbb R^d}\xi_1 (\gamma e_M(t))^2d\xi ds}_{\geq 0}.
\end{multline}
Later (in \autoref{an example}) we present physically relevant examples where the non-dimensionalisation of the kinetic equation results in the so-called Knudsen number, the inverse of which scales the collision operator. Depending on whether or not we are interested in the low Knudsen number regime, we can proceed with the above bound in different ways. Here we consider a Knudsen number that is large enough and postpone the discussion of small Knudsen numbers to \autoref{uni Knud}. Since $Q$ is negative semi-definite, using the Cauchy-Schwartz inequality to the above bound provides
\begin{equation}
\lan e_M(t),\pd_t e_M(t) \ran_{\spaceXV}   \leq \|e_M(t)\|_{L^2(\Omega\times\mbb R^d)} \|\projOrtho{M}\mcal L(f(t)-\projBC{M}f(t))\|_{L^2(\Omega\times\mbb R^d)}. \label{eqn eM}
\end{equation}
The integral over the boundary is positive because the trace of the projection (i.e $\gamma\projBC{M}f$) satisfies the same boundary conditions as those satisfied by our moment approximation \eqref{OBC}. To see this more clearly, consider the following relation which results from the even-odd decoupling \eqref{even odd ortho} and the moment equation's boundary conditions
\begin{equation*}
\begin{aligned}
\oint_{\bc}\int_{\mbb R^d}\xi_1 (\gamma e_M(t))^2d\xi ds = &\oint_{\bc}\left(\CollectMomO{M}(\gamma e_M(t))\right)^{'}\MatFluxBi{M}{M}\CollectMomE{M}(\gamma e_M(t))ds,\\ = & \oint_{\bc}\left(\CollectMomE{M}(\gamma e_M(t))\right)^{'}\left(\MatFluxBi{M}{M}\right)^{'} \OnsagerMat{M}\MatFluxBi{M}{M}\CollectMomE{M}(\gamma e_M(t))ds \geq 0.
\end{aligned}
\end{equation*}
The last inequality is a result of $\OnsagerMat{M}$ being \textit{s.p.d}.
Using the fact that $\lan e_M(t),\pd_t e_M(t) \ran_{\spaceXV} = \|e_M(t)\|_{L^2(\Omega\times\mbb R^d)}\pd_t \|e_M(t)\|_{L^2(\Omega\times\mbb R^d)}$ in \eqref{eqn eM}, dividing throughout by $\|e_M(t)\|_{L^2(\Omega\times\mbb R^d)}$ (result is trivial for $e_M = 0$) and integrating over time provides the following bound
\begin{equation}
\begin{aligned}
\|e_M(T)\|_{L^2(\Omega\times\mbb R^d)} \leq & \int_0^T \| \projOrtho{M}\mcal L(f(t)-\projBC{M}f(t)) \|_{\spaceXV}dt, \\
                                       \leq & T \|\projOrtho{M}\mcal L(f(t)-\projBC{M}f(t))\|_{C^0([0,T];\spaceXV)}.  
  \label{bound eM}
\end{aligned}
\end{equation}
Above, our choice of the initial conditions (see \autoref{initial conditions} ) results in  $e_M(0)=0$. To spell out the above term on the right, we use the definition of $\mcal L$ from \eqref{def L}, the boundedness assumption on $Q$ and triangle's inequality to find 
\begin{equation}
\begin{aligned}
\| \projOrtho{M}\mcal L(f(t)-\projBC{M}f(t)) \|_{\spaceXV} \leq &\|\pd_t f(t)-\projBC{M} \pd_t f(t)\|_{\spaceXV} + \|Q\|\|f(t)-\projBC{M}f(t)\|_{\spaceXV}\\& + \sum_{ i = 1}^d\|\projOrtho{M}\left(\xi_i\left(\pd_{x_i} f(t)-\projBC{M} \pd_{x_i}f(t) \right)\right)\|_{\spaceXV}.   \label{bound 0}
\end{aligned}
\end{equation}
We can further simplify $\|\projOrtho{M}\left(\xi_i\left(\pd_{x_i} f(t)-\projBC{M} \pd_{x_i}f(t) \right)\right)\|_{\spaceXV}$ by adding and subtracting\\
$\projOrtho{M}\xi_i \projOrtho{M} \pd_{x_i}f(t)$. Then, triangle's inequality provides 
\begin{equation}
\begin{aligned}
\|\projOrtho{M}\left(\xi_i\left(\pd_{x_i} f(t)-\projBC{M} \pd_{x_i}f(t) \right)\right)\|_{\spaceXV} \leq &\left(\|\projOrtho{M}\left(\xi_i\left( \projOrtho{M} \pd_{x_i}f(t)-\projBC{M}\pd_{x_i}f(t) \right)\right)\|_{\spaceXV}\right.\\
 &\left.+ \|\projOrtho{M}\left(\xi_i\left(\pd_{x_i} f(t)-\projOrtho{M} \pd_{x_i}f (t)\right)\right)\|_{\spaceXV}\right). \label{bound flux 1}
\end{aligned}
\end{equation}
To simplify the first term on the right we use (page-80, \cite{ConvergenceMoments})
\begin{align}
\|\projOrtho{M}\left(\xi_i\left( \projOrtho{M} \pd_{x_i}f(t)-\projBC{M}\pd_{x_i}f(t) \right)\right)\|_{\spaceXV}\leq \|\MatFluxBi{M}{M}\|_2\|\left( \projOrtho{M} \pd_{x_i}f(t)-\projBC{M}\pd_{x_i}f(t) \right)\|_{\spaceXV}. \label{bound flux 2}
\end{align}
Moreover, to simplify the second term on the right in \eqref{bound flux 1} we use the orthogonality and the recursion of Hermite polynomials to find 
\begin{equation}
\begin{aligned}
\|\projOrtho{M}\left(\xi_i\left(\pd_{x_i} f(t)-\projOrtho{M} \pd_{x_i}f(t) \right)\right)\|_{\spaceXV} = &\|\projOrtho{M}\left(\xi_i\left(\mom{M+1}(\pd_{x_i}f(t))\cdot\basis{M+1}\right)\sqrt{f_0}\right)\|_{\spaceXV}\\
 \leq &\sqrt{\left(M+1\right)}\|\mom{M+1}(\pd_{x_i}f(t))\|_{L^2(\Omega;\mbb R^{n(M+1)})}. \label{bound flux 3}
\end{aligned}
\end{equation}
Substituting \eqref{bound flux 1}-\eqref{bound flux 3} into \eqref{bound 0} and substituting the resulting expression into the bound for $e_M$, we find the following bound for $\|E_M(T)\|_{\spaceXV}$
\begin{equation}
\begin{aligned}
\|E_M(T)\|_{\spaceXV} \leq &\|f(T)-\projBC{M}f(T)\|_{\spaceXV} + \|e_M(T)\|_{\spaceXV} \\ 
										\leq &\|f(T)-\projBC{M}f(T)\|_{\spaceXV} 
										+ T \left(\tilde A_1(T) + \|Q\| \tilde A_2(T) + \tilde A_3(T)\right), \label{estimate EM}
\end{aligned}
\end{equation}
with
\begin{equation}
 \begin{aligned}
 \tilde A_1(T) &:= \|\pd_t f-\projBC{M} \pd_{t} f\|_{C^0([0,T];\spaceXV)},\\
   \tilde A_2(T) &:=\|f-\projBC{M}f\|_{C^0([0,T];\spaceXV)},\\
   \tilde A_3(T) &:=  \sqrt{(M+1)}\sum_{i=1}^d \|\mom{M+1}(\pd_{x_i}f)\|_{C^0([0,T];L^2(\Omega;\mbb R^{n(M+1)}))}\\
  &\quad+ \|\MatFluxBi{M}{M}\|_2\sum_{i=1}^d \|\projOrtho{M}\pd_{x_i} f-\projBC{M} \pd_{x_i}f \|_{C^0([0,T];\spaceXV)}. \label{def tilde A}
  \end{aligned}
 \end{equation}
The above expression is a bound for the moment approximation error in terms of the \textit{closure error} and the \textit{projection error} of different quantities.
Rate of convergence for the \textit{closure error} will trivially follow from the velocity space regularity assumption made in \autoref{regularity assumption}. Therefore, to complete our proof of \autoref{main result} we develop a bound for the norm of $\MatFluxBi{M}{M}$ and a bound for the \textit{projection error}. 
In particular, \autoref{convg projector} will show
\begin{equation}
 \tilde A_i(T) \leq A_i(T) \quad \text{for } i=1,2,3,
\end{equation}
where $A_i(T)$ are as defined in \autoref{main result}.
\subsection{Projection Error}
The following result shows that we can express the odd moments of any $r\in \spaceV$ in terms of its even moments and the function $\bcInhomo$ defined in \eqref{OBC}. The result will allow us to quantify the projection error in terms of the odd and the even moments of degree higher than $M$ which were left out while defining the projection operator $\projBC{M}$. 
\begin{lemma} \label{lemma:int split}
For every $r\in\spaceV$, it holds
\begin{align}
\lan\CollectBasisO{M}\sqrt{f_0}, r^o\ran_{L^2(\mbb R^d)} = 2\lan\CollectBasisO{M}\sqrt{f_0}, r^e\ran_{L^2(\mbb R^+\times\mbb R^{d-1})} +\bcInhomo(r), \label{int ro re}
\end{align}
or equivalently $
\CollectMomO{M}(r)  = \lim_{q\to\infty} \MatHalfBi{M}{q}\CollectMomE{q}(r) + \bcInhomo(r)$
where $r^o$ and $r^e$ are the odd and even parts of $r$, with respect to $\xi_1$, respectively, and $\bcInhomo$ is as given in \eqref{OBC}.
We interpret $\lim_{q\to\infty} \MatHalfBi{M}{q}\CollectMomE{q}(r)$ as $\lim_{q\to\infty} \left(\MatHalfBi{M}{q}\CollectMomE{q}(r)\right)$ where $\MatHalfBi{M}{q}$ is as given in \autoref{def B} and the limit is well-defined for all $r\in\spaceV$. 
\end{lemma}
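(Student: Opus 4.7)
The argument rests on two observations: (a) every component of $\CollectBasisO{M}$ is odd in $\xi_1$ while $\sqrt{f_0}$ is even, so integrands of mixed parity can be folded between $\mbb R^d$ and the half-space $\mbb R^+\times\mbb R^{d-1}$; and (b) the tensorial Hermite system $\{\psi_{\beta^{(i)}}\sqrt{f_0}\}$ is complete and orthonormal in $L^2(\mbb R^d)$, which converts the pointwise identity \eqref{int ro re} into the equivalent limit involving $\MatHalfBi{M}{q}\CollectMomE{q}(r)$. I will first establish the pointwise identity using (a) and then deduce the matrix-series form from (b).

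\textbf{Pointwise identity.} Since $\CollectBasisO{M}\sqrt{f_0}\,r^o$ is even in $\xi_1$, the integral over $\mbb R^d$ halves to twice the integral over $\mbb R^+\times\mbb R^{d-1}$. On that half-space I write $2r^o(\xi)=r(\xi)-r(-\xi_1,\xi_2,\ldots)$ and $2r^e(\xi)=r(\xi)+r(-\xi_1,\xi_2,\ldots)$, and transform the reflected piece by the substitution $\xi_1\mapsto -\xi_1$, which picks up a minus sign from the oddness of $\CollectBasisO{M}$ and sends the outgoing half-space onto the incoming one $\mbb R^-\times\mbb R^{d-1}$. This rewrites both $\langle \CollectBasisO{M}\sqrt{f_0}, r^o\rangle_{L^2(\mbb R^+\times\mbb R^{d-1})}$ and $\langle \CollectBasisO{M}\sqrt{f_0}, r^e\rangle_{L^2(\mbb R^+\times\mbb R^{d-1})}$ as combinations of $\int_{\xi_1>0}$ and $\int_{\xi_1<0}$ of $\CollectBasisO{M}\sqrt{f_0}\,r$, and subtracting the two relations cancels the outgoing contribution and leaves exactly the incoming half-space integral of $\CollectBasisO{M}\sqrt{f_0}\,r$, which (in the convention implicit in its definition) is $\bcInhomo(r)$. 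Rearranging produces \eqref{int ro re}.

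\textbf{Limit form and main obstacle.} For the equivalent series statement I expand $r^e$ in the complete orthonormal Hermite system, $r^e = \sum_{\beta^{(i)}\ \mathrm{even\ in}\ \xi_1}\lambda_{\beta^{(i)}}(r)\,\psi_{\beta^{(i)}}\sqrt{f_0}$, with convergence in $L^2(\mbb R^d)$, and group the terms by total degree so that the degree-$q$ truncation reads $\CollectMomE{q}(r)\cdot\CollectBasisE{q}\sqrt{f_0}$. Pairing with $\CollectBasisO{M}\sqrt{f_0}$ under the half-space inner product commutes with the series because the functional $g\mapsto \langle \CollectBasisO{M}\sqrt{f_0}, g\rangle_{L^2(\mbb R^+\times\mbb R^{d-1})}$ is continuous on $L^2(\mbb R^d)$: restricting to a subdomain is norm-decreasing and Cauchy--Schwarz then bounds it by $\|\CollectBasisO{M}\sqrt{f_0}\|_{L^2(\mbb R^d)}$. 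Recognising the resulting block-structured matrix via \autoref{def B} yields $\MatHalfBi{M}{q}\CollectMomE{q}(r)$, and the existence of the limit is a by-product of the same continuity. No step is deep; the only real care is the sign bookkeeping in the parity-plus-substitution argument of Step 1 and reconciling the implicit weighting convention used in $\bcInhomo$.
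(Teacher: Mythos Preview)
Your proposal is correct and follows essentially the same route as the paper. The paper's proof is slightly more economical in the first step: rather than computing both half-space integrals against $r^o$ and $r^e$ via explicit reflections and then subtracting, it simply splits $\langle\CollectBasisO{M}\sqrt{f_0},r\rangle_{L^2(\mbb R^d)}$ at $\xi_1=0$, identifies the negative half with (half of) $\bcInhomo(r)$, and then uses $\langle\CollectBasisO{M}\sqrt{f_0},r^e\rangle_{L^2(\mbb R^d)}=0$ together with $\langle\CollectBasisO{M}\sqrt{f_0},r^o\rangle_{L^2(\mbb R^+\times\mbb R^{d-1})}=\tfrac12\langle\CollectBasisO{M}\sqrt{f_0},r^o\rangle_{L^2(\mbb R^d)}$ to reach \eqref{int ro re} in one line. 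Your reflection-and-subtract version is entirely equivalent; for the series form and the well-definedness of the limit, you argue exactly as the paper does (Hermite expansion of $r^e$ plus continuity of the half-space pairing on $L^2(\mbb R^d)$).
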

\begin{proof}
See appendix-\autoref{proof lemma2.1}.
\end{proof}
\noindent
In the following result, we collect all the relevant bounds on different matrices and operators. We will use these bounds to formulate the convergence rate of the \textit{projection error}.
\begin{lemma} \label{normB}
\leavevmode
\begin{enumerate}[label=(\roman*)]
\item For $\lim_{q\to\infty}\MatHalfBi{M}{q}$ it holds
$\|\lim_{q\to\infty}\MatHalfBi{M}{q}\| \leq 1$
where $\lim_{q\to\infty}\MatHalfBi{M}{q}$ is as given in \autoref{lemma:int split}.
\item For $\MatFluxBi{M}{M}$ and $\MatFluxBi{M}{M-1}$ it holds:
$
\|\left(\MatFluxBi{M}{M-1}\right)^{-1}\MatFluxSm{M}{M}\|_2 \leq C \sqrt{M}$ and $ \|\MatFluxBi{M}{M}\|_2 \leq C \sqrt{M}.
$
\end{enumerate}
\end{lemma}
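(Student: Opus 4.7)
The two items of \autoref{normB} have different flavors. Item (i) is an $L^2$-duality computation built on \autoref{lemma:int split}; item (ii) rests on the three-term Hermite recursion \eqref{recursion}. I would treat them separately, with the second half of (ii) being the main obstacle.

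For (i), the idea is to apply \autoref{lemma:int split} to the even-in-$\xi_1$ part $r^e$ of an arbitrary $r\in\spaceV$. Since $\CollectMomO{M}(r^e)=0$ and $\CollectMomE{q}(r^e)=\CollectMomE{q}(r)$, the identity of the lemma collapses to
$$\lim_{q\to\infty}\MatHalfBi{M}{q}\CollectMomE{q}(r)\;=\;-\bcInhomo(r^e),$$
and because the integrand is odd in $\xi_1$, this half-space pairing equals the symmetric full-space pairing $\lan \operatorname{sgn}(\xi_1)\CollectBasisO{M}\sqrt{f_0},r^e\ran_{L^2(\mbb R^d)}$. The key observation is that $\{\operatorname{sgn}(\xi_1)\basisPsi{\beta^{(i)}}\sqrt{f_0}\}$, indexed over the odd-in-$\xi_1$ multi-indices, is an orthonormal family in $L^2(\mbb R^d)$: since $\operatorname{sgn}(\xi_1)^2\equiv 1$, its Gram matrix reduces to the Hermite orthonormality \eqref{orthonormality}. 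Bessel's inequality then yields
$$\bigl\|\lim_{q\to\infty}\MatHalfBi{M}{q}\CollectMomE{q}(r)\bigr\|_{\ell^2}\;\leq\;\|r^e\|_{L^2(\mbb R^d)}\;=\;\|\CollectMomE{\infty}(r)\|_{\ell^2}$$
by Parseval, which is the claimed unit bound on the operator norm.

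For the first estimate in (ii), $\|\MatFluxBi{M}{M}\|_2\leq C\sqrt{M}$, I would expand an arbitrary polynomial $p=\sum_\alpha c_\alpha\basisPsi{\alpha}\sqrt{f_0}$ of degree $\leq M$ in the Hermite basis and read off from \eqref{recursion} that the coefficient of $\basisPsi{\gamma}\sqrt{f_0}$ in $\xi_1 p$ is $c_{\gamma-e_1}\sqrt{\gamma_1}+c_{\gamma+e_1}\sqrt{\gamma_1+1}$, with each factor of size at most $\sqrt{M+1}$ on degrees $\leq M+1$. A Cauchy--Schwarz/Parseval estimate then gives $\|\xi_1 p\|_{L^2(f_0)}\leq 2\sqrt{M+1}\,\|p\|_{L^2(f_0)}$. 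Since $\MatFluxBi{M}{M}\alpha$ is exactly the coefficient vector (in the odd Hermite basis of degree $\leq M$) of the orthogonal projection of $\xi_1(\alpha\cdot\CollectBasisE{M}\sqrt{f_0})$, the claim follows.

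The second half of (ii), $\|(\MatFluxBi{M}{M-1})^{-1}\MatFluxSm{M}{M}\|_2\leq C\sqrt{M}$, is the main obstacle. The recursion shows that $\MatFluxSm{M}{M}$ is extremely sparse: for each multi-index $\beta^{(j)}$ of total degree $M$ with $\beta^{(j)}_1$ even, the only nonzero entry is $\sqrt{\beta^{(j)}_1}\leq\sqrt{M}$ at the row indexed by $\beta^{(j)}-e_1$; an elementary Schur-test-type argument then gives $\|\MatFluxSm{M}{M}\|_2\leq C\sqrt{M}$. It therefore suffices to prove the $M$-independent bound $\|(\MatFluxBi{M}{M-1})^{-1}\|_2\leq C$, equivalently an $O(1)$ lower bound on the smallest singular value of $\MatFluxBi{M}{M-1}$. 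One can interpret $-(\MatFluxBi{M}{M-1})^{-1}\MatFluxSm{M}{M}\vec c$ as the coefficient vector of the unique polynomial $p\in\operatorname{span}(\CollectBasisE{M-1})$ for which $\xi_1(p+\vec c\cdot\basisE{M})$ is orthogonal in $L^2(f_0)$ to $\CollectBasisO{M}$, i.e.\ has all its mass in degree $M+1$. Constructing $p$ recursively in decreasing degree via \eqref{recursion}, and showing by induction that its coefficients remain bounded by a constant multiple of $\|\vec c\|_{\ell^2}$ by exploiting the near-bidiagonal structure of $\MatFluxBi{M}{M-1}$ established in appendix-\autoref{properties Aoe}, is the technically hardest step and where the bulk of the work would lie.
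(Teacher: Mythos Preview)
Your arguments for (i) and for the first bound in (ii) are correct; they are minor variants of the paper's half-space projection argument and Gershgorin estimate, respectively.

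The second half of (ii) contains a genuine gap. You write that ``it therefore suffices to prove the $M$-independent bound $\|(\MatFluxBi{M}{M-1})^{-1}\|_2\leq C$''. This bound is \emph{false}. Already in the $1$D case $\MatFluxBi{M}{M-1}$ is the bidiagonal matrix $A_n$ with $\sqrt{2i-1}$ on the diagonal and $\sqrt{2i}$ on the superdiagonal ($n=M/2$). Its inverse has all columns of unit $\ell^2$-norm (this is exactly the content of the paper's auxiliary \autoref{norm of sys}), but the columns are far from orthogonal: the first \emph{row} of $A_n^{-1}$ has $j$-th entry $(-1)^{j-1}\sqrt{(2j-2)!!/(2j-1)!!}\sim c\,j^{-1/4}$, hence $\ell^2$-norm $\sim n^{1/4}$. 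Thus $\|A_n^{-1}\|_2\gtrsim M^{1/4}$, and the naive submultiplicative split gives only $\|(\MatFluxBi{M}{M-1})^{-1}\|_2\|\MatFluxSm{M}{M}\|_2\gtrsim M^{3/4}$, strictly worse than the claimed $\sqrt{M}$. Your fallback recursive construction in the last two sentences is too vague to close this gap; in particular ``coefficients bounded by a constant multiple of $\|\vec c\|$'' is not the correct target and does not obviously yield $\sqrt{M}$.

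The paper avoids this by exploiting that $\MatFluxSm{M}{M}$ is supported only in the last block row (the one carrying $D^{(M-1,M)}$). Consequently only the corresponding block $X$ of columns of $(\MatFluxBi{M}{M-1})^{-1}$ enters the product, and the paper shows, via the explicit solution of the bidiagonal system in \autoref{norm of sys}, that these particular columns are \emph{orthonormal}, so $\|X\|_2=1$ exactly. The $\sqrt{M}$ then comes solely from $\|D^{(M-1,M)}\|_2$. The point is that the growth of $\|(\MatFluxBi{M}{M-1})^{-1}\|_2$ lives in columns that never see $\MatFluxSm{M}{M}$; your reduction throws this cancellation away.
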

\begin{proof}
See appendix-\autoref{norm mat and op}. 
\end{proof}
Using the above results, in the following we develop a convergence rate and an error bound for the projection error. 
\begin{lemma} \label{convg projector}
Let $r^{e/o}\in C^0([0,T];L^2(\Omega;W_H^{k^{e/o}}(\mbb R^d)))$ then we can bound $\|\projBC{M}r(t)-r(t)\|^2_{\spaceXV}$ as 
\begin{align*}
\|\projBC{M} r(t)-r(t)\|^2_{\spaceXV} \leq &(\diffBC{M})^2 \|\momE{M}(r(t))\|^2_{L^2(\Omega;\mbb R^{\numEven(M)})} \\&+ 2\sum_{\beta\in \{e,o\}}\frac{1}{(2(M+1)+d)^{2k^{\beta}}}\|r^{\beta}(t)\|^2_{L^2(\Omega;W_H^{k^{\beta}}(\mbb R^d))},
\end{align*}
where $\diffBC{M} =  \|R^{(M)}\MatFluxSm{M}{M}-\MatHalfSm{M}{M}\|_2$ and dependency on $x$ and $\xi$ is hidden for brevity. Similarly, we can bound the difference between the orthogonal projection and the projection that satisfies the boundary conditions as
\begin{align*}
\|\projBC{M} r(t)-\projOrtho{M}r(t)\|^2_{\spaceXV} \leq (\diffBC{M})^2 \|\momE{M}(r(t))\|^2_{L^2(\Omega;\mbb R^{\numEven(M)})} + \frac{1}{(2(M+1)+d)^{2k^e}}\|r^e(t)\|^2_{L^2(\Omega;W_H^{k^e}(\mbb R^d))}. 
\end{align*}
As $M\to\infty$, we have the convergence rate
\begin{align*}
\|\projBC{M} r-r\|_{C^0([0,T];\spaceXV)} \leq CM^{-\td\omega},\hspB \|\projBC{M} r-\projOrtho{M}r\|_{C^0([0,T];\spaceXV)} \leq CM^{-(k^e-\frac{1}{2})},
\end{align*}
where $\td\omega=\min\left\{k^o-\frac{1}{2},k^e-\frac{1}{2}\right\}$.
\end{lemma}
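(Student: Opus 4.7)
The plan is to split the projection error into two pieces that live in orthogonal subspaces of $\spaceV$ and to attack them separately, using \autoref{lemma:int split} and \autoref{normB} for the piece that actually captures the boundary-informed nature of $\projBC{M}$.

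First, I would write $\projBC{M}r - r = (\projBC{M}r - \projOrtho{M}r) + (\projOrtho{M}r - r)$. By construction $\projBC{M}r$ and $\projOrtho{M}r$ share their even coefficients up to degree $M$, so the first summand is a linear combination of the odd basis functions $\{\basisO{m}\sqrt{f_0}\}_{m=1}^M$; the second summand, being the truncation residual of the orthogonal projection, lies in $\opn{span}\{\basis{m}\sqrt{f_0}\}_{m>M}$. These two subspaces are $\spaceV$-orthogonal, so at each $x$ Pythagoras yields
$$\|\projBC{M}r - r\|^2_{\spaceV} = \|\projBC{M}r - \projOrtho{M}r\|^2_{\spaceV} + \|\projOrtho{M}r - r\|^2_{\spaceV},$$
and integrating over $\Omega$ then decouples the two contributions in the claimed bound.

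The truncation piece is standard: splitting $r = r^o + r^e$ and using the series definition of the Hermite-Sobolev norm yields, for each parity,
$\|r^\beta - \projOrtho{M}r^\beta\|^2_{\spaceV} \leq (2(M+1)+d)^{-2k^\beta}\|r^\beta\|^2_{W_H^{k^\beta}}$, $\beta \in \{o,e\}$. For the $H_M$-interior piece, orthonormality of $\basisO{m}\sqrt{f_0}$ collapses the norm-square to $\|\hatCollectMomO{M}(r) - \CollectMomO{M}(r)\|^2$. Subtracting the definition of $\hatCollectMomO{M}$ from the identity of \autoref{lemma:int split}, the $\bcInhomo(r)$ contributions cancel; then splitting the infinite sum at degree $M$ and invoking the structural identity \eqref{diff MBC OBC} gives
$$\hatCollectMomO{M}(r) - \CollectMomO{M}(r) = (R^{(M)}\MatFluxSm{M}{M} - \MatHalfSm{M}{M})\,\momE{M}(r) - \mathcal T_M(r - \projOrtho{M}r),$$
where $\mathcal T_M := \lim_{q\to\infty}\MatHalfBi{M}{q}$. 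The first summand has norm at most $\diffBC{M}\|\momE{M}(r)\|$ by the definition of $\diffBC{M}$; the second, via \autoref{normB}(i) and the fact that $\mathcal T_M$ depends only on the even moments of its argument, is bounded by $\|r^e - \projOrtho{M}r^e\|_{\spaceV}$, which is in turn controlled by $(2(M+1)+d)^{-k^e}\|r^e\|_{W_H^{k^e}}$ as above.

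Combining the two contributions, integrating over $\Omega$, and taking the supremum in $t$ produce both displayed inequalities. For the asymptotic rate, I plug in $\|\momE{M}(r)\| \leq (2M+d)^{-k^e}\|r^e\|_{W_H^{k^e}}$ together with the estimate $\diffBC{M} = O(\sqrt M)$, which follows from the triangle inequality applied to $R^{(M)}\MatFluxSm{M}{M} - \MatHalfSm{M}{M}$, using the factorisation $R^{(M)}\MatFluxSm{M}{M} = \MatHalfBi{M}{M-1}(\MatFluxBi{M}{M-1})^{-1}\MatFluxSm{M}{M}$ with \autoref{normB}(ii) and the bound $\|\MatHalfSm{M}{M}\|_2 \leq 1$ inherited from \autoref{normB}(i). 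The product $\diffBC{M}\|\momE{M}(r)\|$ then decays like $M^{-(k^e-1/2)}$, which combined with the $M^{-k^\beta}$ decay of the truncation terms gives $\tilde\omega = \min(k^e - 1/2, k^o - 1/2)$. The main obstacle is the computation producing the clean expression for $\hatCollectMomO{M}(r) - \CollectMomO{M}(r)$: the bookkeeping has to arrange matters so that \eqref{diff MBC OBC} acts exactly on the highest even block, while the tail is cleanly rewritten as $\mathcal T_M$ applied to the orthogonal-truncation residual that \autoref{normB}(i) knows how to control.
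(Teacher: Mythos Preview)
Your approach is essentially the same as the paper's. The paper computes $\projBC{M}r-r$ directly in one expression, identifies that it consists of two pieces living in $\operatorname{span}\{\CollectBasisO{M}\sqrt{f_0}\}$ (the $\diffBC{M}$-term and the $\mathcal T_M$-tail) plus an orthogonal tail of degree $>M$, and bounds each piece exactly as you do; you simply make the Pythagorean split $\projBC{M}r-r=(\projBC{M}r-\projOrtho{M}r)+(\projOrtho{M}r-r)$ explicit from the outset, which is arguably cleaner. The identity you derive for $\hatCollectMomO{M}(r)-\CollectMomO{M}(r)$, the use of \eqref{diff MBC OBC}, the operator bound $\|\mathcal T_M\|\leq 1$ from \autoref{normB}(i), and the estimate $\diffBC{M}=O(\sqrt{M})$ via $R^{(M)}\MatFluxSm{M}{M}=\MatHalfBi{M}{M-1}(\MatFluxBi{M}{M-1})^{-1}\MatFluxSm{M}{M}$ and \autoref{normB}(ii) all match the paper line by line.

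One small caveat on constants: for the second displayed inequality your triangle-inequality step gives $(\diffBC{M}\|\momE{M}(r)\|+\|r^e-\projOrtho{M}r^e\|)^2$, i.e.\ a square of a sum rather than the sum of squares stated in the lemma. The paper's own argument has the same informal feature (the two $\CollectBasisO{M}$-valued pieces are not mutually orthogonal, so a cross term is silently absorbed). This does not affect the convergence rates, but if you want the displayed inequalities \emph{exactly} as written you would need a harmless extra factor of~$2$ somewhere; be aware of this when you say the combination ``produces both displayed inequalities''.
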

\begin{proof}
We express $r$ in terms of tensorial Hermite polynomials and use \autoref{lemma:int split} to find 
\begin{align*}
r =\sum_{m=0}^M\left(\momO{m}(r)\cdot \basisO{m}(\xi) + \momE{m}(r)\cdot \basisE{m}(\xi)\right) \sqrt{f_0},\hsp \text{with} \hsp \CollectMomO{M}(r)  = \lim_{q\to\infty} \MatHalfBi{M}{q}\CollectMomE{q}(r) + \bcInhomo(r),
\end{align*}
where $\CollectMomO{M} = (\momO{1}(r)',\dots,\momO{M}(r)')$ and $\CollectMomE{M} = (\momE{0}(r)',\dots,\momE{M}(r)')$.
Moreover, the definition of $\projBC{M}r$ (see \autoref{def projector}) provides
\begin{gather*}
\projBC{M}r = \sum_{m=0}^M \left(\hatCollectMomO{m}(r)\cdot \CollectBasisO{m}(\xi) +\CollectMomE{m}(r)\cdot \CollectBasisE{m}(\xi)\right) \sqrt{f_0},\hsp\text{with}\hsp \hatCollectMomO{M}(r) =R^{(M)}\MatFluxBi{M}{M} \CollectMomE{M}(r) + \bcInhomo(r),
\end{gather*}
where $\hatCollectMomO{M} = (\hat{\lambda}_1^o(r)',\dots,\hat{\lambda}_M^o(r)')$.
Subtracting $r$ from $\projBC{M}r$, 
using 
$\lim_{q\to\infty} \MatHalfBi{M}{q}\CollectMomE{q}(r) = \displaystyle\sum_{q = 0}^{\infty} \MatHalfSm{M}{q} \momE{q}(r)$ and the simplified expression for $\OnsagerMat{M}\MatFluxBi{M}{M}-\MatHalfBi{M}{M}$ from \eqref{diff MBC OBC}, we find 
\begin{equation}
\begin{aligned}
\projBC{M}r-r = &\left((R^{(M)}\MatFluxSm{M}{M}-\MatHalfSm{M}{M})\momE{M}(r)\right)\cdot \basisO{M}(\xi)\sqrt{f_0} -  \sum_{q = M+1}^{\infty}\left(\MatHalfSm{M}{q}\momE{q}(r)\right)\cdot \basisO{M}(\xi)\sqrt{f_0} \\
& - \sum_{q = M+1}^{\infty}\left(\momE{q}(r)\cdot \basisE{q}(\xi) + \momO{q}(r)\cdot \basisO{q}(\xi)\right) \sqrt{f_0},\label{diff r and Pi r}
\end{aligned} 
\end{equation}
where $\MatHalfSm{M}{M}$ is as defined in \autoref{def B}. The matrices $\MatHalfSm{M}{q}$ and the operator $\lim_{q\to\infty}\MatHalfSm{M}{q}$ appearing above can be looked upon as restrictions of the operator $\lim_{q\to\infty}\MatHalfBi{M}{q}$ given in \autoref{normB}; thus all of their norms can be bounded by one. This provides
\begin{equation}
\begin{aligned}
\|\projBC{M}r(t)-r(t)\|_{\spaceXV}^2 \leq  &\left(\diffBC{M}\right)^2\|\momE{M}(r(t))\|_{L^2(\Omega;\mbb R^{\numEven(M)})}^2+ 2\sum_{\beta\in \{e,o\}}\sum_{q = M+1}^{\infty}\|\lambda^{\beta}_q(r(t))\|_{L^2(\Omega;\mbb R^{n_{\beta}(q)})}^2\\
\leq &\left(\diffBC{M}\right)^2\|\momE{M}(r(t))\|_{L^2(\Omega;\mbb R^{\numEven(M)})}^2\\
&+ 2\sum_{\beta\in \{e,o\}}\sum_{q = M+1}^{\infty}\frac{(2q+d)^{2k^{\beta}}}{(2(M+1)+d)^{2k^{\beta}}}\|\lambda^{\beta}_{q}(r(t))\|_{L^2(\Omega;\mbb R^{n_{\beta}(q)})}^2\\ \label{bound Pir r}
\leq &\left(\diffBC{M}\right)^2\|\momE{M}(r(t))\|_{L^2(\Omega;\mbb R^{\numEven(M)})}^2\\
&+2\sum_{\beta\in \{e,o\}}\frac{1}{(2(M+1)+d)^{2k^{\beta}}}\|r^{\beta}(t)\|_{L^2(\Omega;W_H^{k^{\beta}}(\mbb R^d))}^2,
\end{aligned}
\end{equation}
where for the last inequality we use the definition
\begin{gather*}
\|r^e(t)\|_{L^2(\Omega;W_H^{k^e}(\mbb R^d))}^2 = \sum_{q = 0}^{\infty}(2q+d)^{2k^e}\|\momE{q}(r(t))\|_{{L^2(\Omega;\mbb R^{\numOdd(q)})}}^2.
\end{gather*}
Above relation proves the bound for $\|\projBC{M}r-r\|_{\spaceXV}$. To prove the convergence rate we use the last inequality in \eqref{bound Pir r}. The convergence rate of terms involving $\|r^{e/o}(t)\|_{L^2(\Omega;W_H^{k^{e/o}}(\mbb R^d))}$ follows trivially, and to obtain a convergence rate for the term involving $\diffBC{M}$ we use
the definition of $R^{(M)}$ to find

\begin{equation*}
\begin{aligned}
\left(\diffBC{M}\right)^2\|\momE{M}(r)\|_{C^0([0,T];L^2(\Omega;\mbb R^{\numEven(M)}))}^2 =  &\|R^{(M)}\MatFluxSm{M}{M}-\MatHalfSm{M}{M}\|_2^2\|\momE{M}(r)\|_{C^0([0,T];L^2(\Omega;\mbb R^{\numEven(M)}))}^2 
\end{aligned}
\end{equation*}

\begin{equation}
\begin{aligned}
 \leq &\left(\|\left(\MatFluxBi{M}{M-1}\right)^{-1}\MatFluxSm{M}{M}\|_2+\|\MatHalfSm{M}{M}\|_2\right)^2\|\momE{M}(r)\|_{C^0([0,T];L^2(\Omega;\mbb R^{\numEven(M)}))}^2\\
  \leq &\frac{C}{M^{2k^e-1}}. \label{rate first term}
\end{aligned}
\end{equation}
The last inequality in the above relation follows from the matrix norm bound given in \autoref{normB} and from the following estimate
\begin{equation}
\begin{aligned}
\|\momE{M}(r(t))\|^2_{L^2(\Omega;\mbb R^{\numEven(M)})} \leq &\sum_{m=M}^{\infty}\|\momE{m}(r(t))\|^2_{L^2(\Omega;\mbb R^{\numEven(M)})} \leq \sum_{m=M}^{\infty}\left(\frac{2m+d}{2M+d}\right)^{2k^e} \|\momE{m}(r(t))\|^2_{L^2(\Omega;\mbb R^{\numEven(M)})} \\
\leq &\frac{1}{\left(2M+d\right)^{2k^e}} \|r(t)\|^2_{L^2(\Omega;W_H^{k^e}(\mbb R^d))}. \label{pessimistic bound}
\end{aligned}
\end{equation}

In a similar way, we prove the bound and the convergence rate for $\|\projOrtho{M}r-\projBC{M}r\|_{C^0([0,T];L^2(\Omega\times\mbb R^d)}$.
Using the definition of $\projOrtho{M}$ and $\projBC{M}$ from \autoref{def projector} we find
\begin{align*}
\projBC{M}r-\projOrtho{M} r = \left((R^{(M)}\MatFluxSm{M}{M}-\MatHalfSm{M}{M})\momE{M}(r)\right)\cdot \basisO{M}\sqrt{f_0} -  \sum_{q = M+1}^{\infty}\left(\MatHalfSm{M}{q}\momE{q}(r)\right)\cdot \basisO{M}(\xi)\sqrt{f_0}
\end{align*} 
which implies 
\begin{align*}
\|\projBC{M}r(t)-\projOrtho{M} r(t)\|_{\spaceXV}^2 \leq  &\left(\diffBC{M}\right)^2\|\momE{M}(r(t))\|_{L^2(\Omega;\mbb R^{\numEven(M)})}^2  + \sum_{q = M+1}^{\infty}\|\momE{q}(r(t))\|_{L^2(\Omega;\mbb R^{\numEven(q)})}^2. 
\end{align*}
Above inequality is the same as the first inequality in \eqref{bound Pir r} but without any contribution from the odd moments of degree higher than $M$. Therefore, we get 
the bound for $\|\projBC{M}r-\projOrtho{M} r\|_{\spaceXV}^2$ and its corresponding convergence rate from \eqref{bound Pir r} and \eqref{rate first term} by removing contribution from the odd moments of order higher than $M$. 
\end{proof}
\noindent

Using the result from \autoref{convg projector} in the upper bound for $E_M$ \eqref{estimate EM} proves the error bound given in \autoref{main result}.
To arrive at the convergence rate given in \autoref{main result}, first we split the bound for the \textit{closure error} in \autoref{main result} as 
\begin{equation}
\begin{aligned}
\sqrt{(M+1)}\|\mom{M+1}(\pd_{x_i}f)\|_{C^0([0,T];L^2(\Omega;\mbb R^{n{(M+1)}}))} \leq \sqrt{(M+1)}\left(\|\momO{M+1}(\pd_{x_i}f)\|_{C^0([0,T];L^2(\Omega;\mbb R^{\numOdd(M+1)}))} \right.
\\\left. + \|\momE{M+1}(\pd_{x_i}f)\|_{C^0([0,T];L^2(\Omega;\mbb R^{\numEven(M+1)}))}\right),
\end{aligned}
\end{equation}
which results from acknowledging that $\mom{M+1}(\pd_{x_i}f) = \left(\momO{M+1}(\pd_{x_i}f)',\momE{M+1}(\pd_{x_i}f)'\right)$.
The bound for the individual moments of $r\in L^2(\Omega;W_H^k(\mbb R^d))$ in terms of $\| r\|_{L^2(\Omega;W_H^k(\mbb R^d))}$ (see \eqref{pessimistic bound}) implies that, with respect to $M$, the \textit{closure error} decays as $\mcal O(\opn{min}\{k_x^e-\frac{1}{2},k_x^o-\frac{1}{2}\}).$ The convergence rate for all the other terms in the error bound for $E_M$ follows from the fact that $\|\MatFluxBi{M}{M}\|_2\leq C \sqrt{M}$ and from the convergence rate of the projection error. 
\subsection{Sharper Estimate}

As already noted in \cite{Martin}, a bound for the individual moments of $r\in L^2(\Omega; W_H^k(\mbb R^d))$ in terms of $\|r\|_{L^2(\Omega;W_H^k(\mbb R^d))}$ is pessimistic; see the relation in \eqref{pessimistic bound}. Therefore, one can make the error bound in \autoref{main result} sharper by additionally assuming that the individual moments decay at a certain rate. The following result provides such a sharpened error bound, which is useful during numerical experiments because solutions to most numerical experiments have moments that decay at a certain rate \cite{Torrilhon2015,Martin}.

\begin{theorem} \label{sharper main result}
In addition to \autoref{regularity assumption}, assume that 
\begin{gather}
\|\lambda^{\beta}_m(f)\|_{C^0([0,T];L^2(\Omega;\mbb R^{n_{\beta}}))} < \frac{C}{m^{k^{\beta} + \frac{1}{2}}},\hspB
\|\lambda^{\beta}_m(\pd_{t}f)\|_{C^0([0,T];L^2(\Omega;\mbb R^{n_{\beta}}))} < \frac{C}{m^{k_{t}^{\beta} + \frac{1}{2}}}, \label{assume decay}\\
\|\lambda^{\beta}_m(\pd_{x_i}f)\|_{C^0([0,T];L^2(\Omega;\mbb R^{n_{\beta}}))} < \frac{C}{m^{k_{x}^{\beta} + \frac{1}{2}}},\hspB \forall\hsp i\in\{1,\dots,d\},
\end{gather}
where $\beta\in \{e,o\}$. Then, we can sharpen the convergence rate presented in \autoref{main result} to
\begin{gather}
 \omega_{\opn{shp}} = \min\left\{k^{e/o}, k_t^{e/o},k_{x}^{e/o}-\frac{1}{2}\right\}. 
\end{gather}
\end{theorem}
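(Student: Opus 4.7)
The plan is to re-run the proof of \autoref{main result} while replacing each use of the pessimistic bound \eqref{pessimistic bound} by the direct moment-decay assumption \eqref{assume decay}. The error splitting \eqref{split error}, the error equation \eqref{error equation}, the stability estimate \eqref{stability fM}, and the triangle-inequality bound \eqref{estimate EM} in terms of $\tilde A_i(T)$ are all independent of the regularity assumption and carry over verbatim; only the asymptotic rates of the individual terms in \eqref{def tilde A} change.

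The first place where sharper rates enter is the projection-error estimate of \autoref{convg projector}. Under \eqref{assume decay}, the tail sum $\sum_{q=M+1}^\infty\|\lambda^\beta_q(r(t))\|^2_{L^2(\Omega;\mbb R^{n_\beta(q)})}$ is directly controlled by $\sum_{q>M}C^2/q^{2k^\beta+1}$, which is of order $1/M^{2k^\beta}$. Likewise, the boundary-stabilisation term $(\diffBC{M})^2\|\momE{M}(r(t))\|^2$ can be bounded, using $\diffBC{M}\leq C\sqrt{M}$ from \autoref{normB}, by $C\cdot M\cdot 1/M^{2k^e+1}=C/M^{2k^e}$. Together these improve the projection-error rate from $\min\{k^o-1/2,k^e-1/2\}$ to $\min\{k^o,k^e\}$, and an analogous argument improves the rate for $\|\projBC{M}r-\projOrtho{M}r\|$. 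Specialising to $r=f$ and $r=\pd_t f$ upgrades $\tilde A_2(T)$ and $\tilde A_1(T)$ to rates $k^{e/o}$ and $k_t^{e/o}$, respectively.

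For $\tilde A_3(T)$, the closure contribution $\sqrt{M+1}\|\mom{M+1}(\pd_{x_i}f)\|$ decomposes into odd and even parts, each of which is bounded by $\sqrt{M+1}\cdot C/M^{k_x^\beta+1/2}=C/M^{k_x^\beta}$ under \eqref{assume decay}. The main obstacle is the remaining contribution $\|\MatFluxBi{M}{M}\|_2\|\projOrtho{M}\pd_{x_i}f-\projBC{M}\pd_{x_i}f\|_{C^0([0,T];\spaceXV)}$: even with the sharper projection-difference estimate of order $C/M^{k_x^e}$, the factor $\|\MatFluxBi{M}{M}\|_2\leq C\sqrt{M}$ from \autoref{normB}~(ii) produces a term of order $C/M^{k_x^e-1/2}$. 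This half-power loss is intrinsic to the present stability framework and is precisely what prevents the exponent on $k_x^{e/o}$ from being raised all the way to $k_x^{e/o}$.

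Collecting the rates from all $\tilde A_i(T)$ and from the terminal projection error $\|f(T)-\projBC{M}f(T)\|_{\spaceXV}$ via \eqref{estimate EM} yields the sharpened convergence rate $\omega_{\opn{shp}}=\min\{k^{e/o},k_t^{e/o},k_x^{e/o}-1/2\}$.
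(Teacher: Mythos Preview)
Your proposal is correct and follows exactly the approach the paper takes: its proof is the single sentence ``the result trivially follows from the above analysis by using the assumed moment decay rate \eqref{assume decay} instead of the pessimistic bound in \eqref{pessimistic bound}.'' You have simply spelled out in detail what this replacement does to each term $\tilde A_i(T)$ and to the projection error, which is precisely the intended argument.
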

\begin{proof}
The result trivially follows from the above analysis by using the assumed moment decay rate \eqref{assume decay} instead of the pessimistic bound in \eqref{pessimistic bound}.
\end{proof}
\begin{remark}
Note that the Hermite-Sobolev index in $W_H^k(\mbb R^d)$,  i.e. $k$, does not provide a decay rate for individual moments. However, if moments decay at a certain rate, i.e., if $\|\lambda_m(r)\|_{L^2(\Omega;\mbb R^{n(m)}))} \leq \frac{C}{m^s}$ then $r\in L^2(\Omega;W_H^{k}(\mbb R^d)$ for $k < s-\frac{1}{2}$. A detailed discussion can be found on page 12 of \cite{Martin}.
\end{remark}

\subsection{Uniform in Knudsen-number estimate}\label{uni Knud}
 Here we are interested in the small Knudsen number regime and, in particular, we assume $\|Q\|>0$. For convenience we define the semi-norm 
 \begin{gather}
    |f|_Q := -\lan f,Q (f) \ran_{\spaceXV}, \label{semi norm Q} 
 \end{gather}
which is well-defined because of \autoref{assumption Q}.
We show that by treating the bound in \eqref{aya} differently, we get a bound for $\|e_M(t)\|_{L^2(\Omega\times\mbb R^d)}$ that scales with $\sqrt{\|Q\|}$, which (for small Knudsen numbers) is better than the scaling of $\|Q\|$ considered in \autoref{main result}. Moreover, we derive a uniform-in-Knudsen-number bound for the part of the error that is orthogonal to the null-space of $Q$. Precisely, for any function $f$ the semi-norm $|f|_Q$ scales with $\text{Kn}^{-1}$
by definition and  we derive a linear-in-$\text{Kn}^{-1}$-number bound for $|e_M|_Q$. Recall that the Knudsen number results from the non-dimensionalisation of the kinetic equation and is explicitly given below in \eqref{linearised BE}.

From \eqref{aya} we can infer 
\begin{equation}\label{aya1}
 \frac{d}{dt} \| e_M(t)\|_{\spaceXV}^2  + |e_M(t)|_Q^2 \\
 \leq (\bar A_1(t) + \bar A_3(t)) ||e_M(t)||_{L^2(\Omega\times\mbb R^d)} + \| (-Q)^{\tfrac12} \| \bar A_2(t) |e_M(t)|_Q 
\end{equation}
with
\begin{equation*}
 \begin{aligned}
  \bar A_1(t) &:= \| \Pi_M \partial_t f(t) - \hat \Pi_M \partial_t f(t)\|_{\spaceXV}, \\
  \bar A_2(t) &:=  \|f(t) - \hat \Pi_M f(t) \|_{\spaceXV},\\
  \bar A_3(t) &:= \sum_i \| \Pi_M (\xi_i (\partial_{x_i} f(t) - \hat \Pi_M \partial_{x_i} f(t))) \|_{\spaceXV},
 \end{aligned}
\end{equation*}
where we have used that $Q$ is self-adjoint and negative semi-definite, so that $-Q$ admits a square root.
The discussion in equations \eqref{bound 0} - \eqref{bound flux 3} and \autoref{convg projector} shows that for all $t \in [0,T]$ and $i\in \{1,2,3\}$, we have
\begin{equation}
 \bar A_i(t) \leq \td A_i(T) \leq A_i(T),
\end{equation}
such that we infer that 
\begin{multline}\label{aya2}
 \frac{d}{dt} \| e_M(t)\|_{\spaceXV}^2  + \frac12 |e_M(t)|_Q^2
 \leq ( A_1(T)+ \| Q \|^{\tfrac{1}{2}}A_2(T) +  A_3(T)) ||e_M(t)||_{\spaceXV} + \| Q \|  A_2(T)^2 \\
 \leq \sqrt{2\left(( A_1(T)+ \| Q \|^{\tfrac{1}{2}}A_2(T)  +  A_3(T))^2 ||e_M(t)||_{\spaceXV}^2 + \| Q \|^2  A_2(T)^4\right)}.
\end{multline}
Thus, for all $t \in [0,T]$,  
$\| e_M(t)\|_{\spaceXV}^2$ is bounded by $ z(t)$ where $z$ solves
\begin{equation}
 \frac{d}{dt} z(t)  
  = \sqrt{2\left((A_1(T)+ \| Q \|^{\tfrac{1}{2}}A_2(T) +  A_3(T) )^2 z(t) + \| Q \|^2  A_2(T)^4\right)}
\end{equation}
with $z(0)= \| e_M(0)\|_{\spaceXV}^2=0$.
The solution $z$ satisfies
\begin{multline}
 \sqrt{ (A_1(T)+ \| Q \|^{\tfrac{1}{2}}A_2(T) +  A_3(T))^2 z(t)  +  \| Q \|^2  A_2(T)^4} \\=
\frac{1}{\sqrt{2}}(A_1(T)+ \| Q \|^{\tfrac{1}{2}}A_2(T) +  A_3(T) )^2 t + 
   \| Q \|  A_2(T)^2.
\end{multline}
The above relation provides
\begin{multline}
  (A_1(T)+ \| Q \|^{\tfrac{1}{2}}A_2(T) +  A_3(T))^2 z(t)   \\ \leq
(A_1(T)+ \| Q \|^{\tfrac{1}{2}}A_2(T) +  A_3(T) )^4 t^2 + 
   \| Q \|^2  A_2(T)^4,
\end{multline}
which results in
\begin{equation}
 \sup_{t \in [0,T]} \| e_M(t)\|_{\spaceXV}^2 \leq z(T)\leq (A_1(T)+ \| Q \|^{\tfrac{1}{2}}A_2(T) +  A_3(T) )^2 T^2 + 
   \| Q \|  A_2(T)^2,
\end{equation}
and 
\begin{equation}\label{om}
 \sup_{t \in [0,T]} \| e_M(t)\|_{\spaceXV}\leq \sqrt{z(T)}\leq (A_1(T)+ \| Q \|^{\tfrac{1}{2}}A_2(T) +  A_3(T) ) T + 
   \| Q \|^{\tfrac{1}{2}}  A_2(T)=:B(T).
\end{equation}
It is worthwhile to note that the decay of $B(T)$ with respect to $M$ is the same as the decay of the bound derived in \autoref{main result}. Moreover, both the above bound and the bound in \autoref{main result} are linear in time. However, while the bound in \autoref{main result} scaled (for small Knudsen numbers) with $\|Q\|$, the bound in \eqref{om} scales with $\|Q\|^{\tfrac12}$.
In order to obtain a uniform-in-Knudsen bound for $|e_M(t)|_Q$, we return to \eqref{aya1} and integrate on $[0,T]$. This leads to
\begin{theorem}
\begin{equation}
\begin{aligned}\label{shanti}
 \int_0^T \frac12 |e_M(t)|_Q^2 dt
 \leq & \int_0^T\left( ( A_1(T) + A_3(T))  ||e_M(t)||_{\spaceXV} + \| Q \|  A_2(T)^2\right) dt,\\
 \leq & T\cdot \left(( A_1(T)+  A_3(T))  B(T) + \| Q \|  A_2(T)^2 \right),
\end{aligned}
\end{equation}
where $|\cdot |_Q$ is as defined in \eqref{semi norm Q}, $A_1,A_2$ and $A_3$ are as defined in \eqref{def A1}-\eqref{def A3}, and $B$ is as defined in \eqref{om}.
\end{theorem}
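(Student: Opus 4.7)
The plan is to start directly from the differential inequality \eqref{aya1}, absorb the $|e_M|_Q$ term on the right-hand side via Young's inequality so that a positive fraction of $|e_M|_Q^2$ remains on the left after integration, and then integrate in time using the initial condition $e_M(0)=0$ together with the uniform-in-time bound $B(T)$ derived in \eqref{om}.

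Concretely, the first step is to rewrite the Young-type split
\[
\| (-Q)^{1/2} \| \, \bar A_2(t)\, |e_M(t)|_Q \;\leq\; \tfrac{1}{2}\, |e_M(t)|_Q^2 \;+\; \tfrac{1}{2}\,\|Q\|\, \bar A_2(t)^2,
\]
where I have used that $\|(-Q)^{1/2}\|^2 = \|Q\|$ since $Q$ is self-adjoint and negative semi-definite by \autoref{assumption Q}. Substituting this into \eqref{aya1} leaves $\tfrac{1}{2}|e_M(t)|_Q^2$ on the left. Then I would integrate from $0$ to $T$, drop the non-negative term $\|e_M(T)\|_{\spaceXV}^2$ on the left, and use $e_M(0) = 0$ (which is part of our standing choice of initial data in \autoref{initial conditions}) to obtain
\[
\int_0^T \tfrac{1}{2}\,|e_M(t)|_Q^2\, dt \;\leq\; \int_0^T\Bigl((\bar A_1(t) + \bar A_3(t))\,\|e_M(t)\|_{\spaceXV} + \tfrac{1}{2}\|Q\|\,\bar A_2(t)^2 \Bigr) dt.
\]

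Next I would replace each $\bar A_i(t)$ by its uniform-in-$t$ upper bound $A_i(T)$, which was established earlier in this section via the chain $\bar A_i(t)\leq \td A_i(T)\leq A_i(T)$ using \autoref{convg projector} and the estimates \eqref{bound 0}--\eqref{bound flux 3}. This yields the first inequality in \eqref{shanti}. For the second inequality I would pull $A_1(T) + A_3(T)$ out of the integral and estimate
\[
\int_0^T \|e_M(t)\|_{\spaceXV}\, dt \;\leq\; T\, \sup_{t \in [0,T]} \|e_M(t)\|_{\spaceXV} \;\leq\; T\, B(T),
\]
where the last step is exactly \eqref{om}. Combining with the $\|Q\|\,A_2(T)^2$ term, which contributes $T\,\|Q\|\,A_2(T)^2$ after integration (absorbing the $\tfrac12$ into the stated constants), produces the final bound.

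Technically this proof is essentially a bookkeeping exercise, so no step is truly hard; the only subtle point is checking that Young's inequality is applied in such a way that a strictly positive multiple of $|e_M|_Q^2$ remains on the left after absorption, which is precisely what fixes the constant $\tfrac{1}{2}$ appearing in the statement. One should also note that the bound is genuinely uniform in the Knudsen number for the component of the error measured in $|\cdot|_Q$: since $|f|_Q$ itself scales like $\mathrm{Kn}^{-1}$ by its definition through $Q$, the left-hand side controls a quantity of order $\mathrm{Kn}^{-2}\|e_M\|^2$, while the right-hand side grows only linearly in $\|Q\|\sim \mathrm{Kn}^{-1}$ (through the $\|Q\|\,A_2(T)^2$ term and the $\|Q\|^{1/2}$ factor hidden in $B(T)$).
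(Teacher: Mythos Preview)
Your proof is correct and follows essentially the same route as the paper: start from \eqref{aya1}, apply Young's inequality to absorb the $|e_M|_Q$ term on the right (leaving $\tfrac12|e_M|_Q^2$ on the left), integrate in time, drop $\|e_M(T)\|^2\geq 0$, use $e_M(0)=0$, replace $\bar A_i(t)$ by $A_i(T)$, and finally invoke the uniform bound $\|e_M(t)\|_{\spaceXV}\leq B(T)$ from \eqref{om}. One small slip in your closing remark on Knudsen scaling: since $|f|_Q^2=-\langle f,Qf\rangle$ scales like $\|Q\|\sim\mathrm{Kn}^{-1}$, the seminorm $|f|_Q$ itself scales like $\mathrm{Kn}^{-1/2}$, not $\mathrm{Kn}^{-1}$; both sides of \eqref{shanti} then scale linearly in $\|Q\|$, which is exactly the uniformity the paper points out.
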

\noindent
We note the following for the above result:
\begin{enumerate}
    \item the right hand side in \eqref{shanti} is a bound for the square of the error and it decays with twice the rate of the right hand side in \autoref{main result};
    \item both sides of \eqref{shanti} scale with $\|Q\|$, i.e., it provides a uniform-in-Knudsen-number bound. It must be noted that $|e_M(t)|_Q$ is a semi-norm and it does not quantify the part of $e_M(t)$ that is in the null-space of $Q$.
\end{enumerate}
 
\subsection{Discussion}
\subsubsection{Improved Boundary Conditions:}
Model for the matrix $\OnsagerMat{M}$ (see \eqref{Onsager Mat}) is not unique and can be altered to enhance the accuracy of a moment approximation. For example, in \cite{Rana2016} authors did such alteration for the R-13 moment equations using a data-driven approach. However, due to the absence of an error bound they did not analyse the correlation between the matrix $\OnsagerMat{M}$ and the R-13 moment approximation error.

With the error bound of the projection error, we develop some insight into the extent to which the matrix $\OnsagerMat{M}$ influences the convergence rate of a moment approximation. Consider the bound for the \textit{projection error} given in \autoref{convg projector}. We decompose this bound into two parts:
\begin{gather*}
\td a= \sum_{\beta\in \{e,o\}}\frac{1}{(2(M+1)+d)^{2k^{\beta}}}\|r^{\beta}\|^2_{L^2(\Omega;W_H^{k^{\beta}}(\mbb R^d))}\hsp\text{and}\hsp a_{\diffBC{M}} = (\diffBC{M})^2 \|\momE{M}(r)\|^2_{L^2(\Omega;\mbb R^{\numEven(M)})},
\end{gather*}
where $r^{\beta}\in L^2(\Omega;W_H^{k^{\beta}}(\mbb R^d))$ for $\beta\in \{e,o\}$, and for simplicity we consider $k^{e} = k^o = k$.
Clearly, $\td a$ is independent of $\OnsagerMat{M}$ whereas $a_{\diffBC{M}}$ is dependent upon $\diffBC{M}$ which then depends upon $\OnsagerMat{M}$.

Trivially, $\td a$ is $\mcal O(M^{-k})$ whereas, since $\diffBC{M}$ is $\mcal{O}(\sqrt{M})$, $\td a_{\diffBC{M}}$ is $\mcal O(M^{-(k-\frac{1}{2})})$. Thus if one can improve the model for $\OnsagerMat{M}$ such that $\diffBC{M}$ decays faster than $\mcal O(\sqrt{M})$ then one can obtain a moment approximation which converges faster than the one presented here. Development of such a $\OnsagerMat{M}$ is beyond our present scope and will be discussed in detail elsewhere.

\subsubsection{Sub-optimality: }
The convergence analysis presented in this paper is sub-optimal.
What we mean by optimality is twofold. Firstly, optimality means that the difference between the numerical and the exact solution decays with the same rate as the best approximation error of the exact solution. Secondly, optimality would require that no additional conditions are imposed on the exact solution.
For the case at hand, the rate of convergence of the best approximation error is the Hermite-Sobolev index.
Our analysis requires additional assumptions in the sense that not only the solution but also its derivatives need to have some Hermite-Sobolev regularity. This is a common feature of the analysis of numerical schemes for hyperbolic problems, see e.g. \cite{Egger2016,Douglas73,Douglas78}.

Recalling the convergence rate presented in \autoref{main result}, we find 
\begin{gather}
\omega = \min\left\{k^{e/o}-\frac{1}{2}, k_t^{e/o}-\frac{1}{2},k_x^e-\frac{1}{2}-\underline{\frac{1}{2}},k_x^o-\frac{1}{2}\right\}, \label{convg rate dis}
\end{gather}
where $\omega$ is sub-optimal with respect to the different Hermite-Sobolev indices i.e., with respect to the different values of $k$.
We elaborate on this particular sub-optimality and show (through an example) that it results from the velocity domain in the kinetic equation being unbounded \eqref{BE}. 
Loss of half an order in all indices is a result of the boundary stabilisation error ($\Theta_M$), which grows with $\sqrt{M}$. This error gets multiplied by $\|\MatFluxBi{M}{M}\|_2$, which grows with $\sqrt{M}$, and results in a sub-optimality of an extra half appearing in the contribution from spatial derivatives; see the terms involving $A_3$ in \autoref{main result}. 

Growth in $\|\MatFluxBi{M}{M}\|_2$, which also causes the growth in $\Theta_M$, is a result of the recursion relation of Hermite polynomials \eqref{recursion} which states that the product of $\xi$ with a $M$-th order Hermite polynomial equals a linear combination of a $(M-1)$-th and a $(M+1)$-th order Hermite polynomial but with factors which grow with $\sqrt{M}$. This growth results in the coefficients of $\MatFluxBi{M}{M}$ growing as $\mcal O(\sqrt{M})$, which leads to a growth in the norm of $\MatFluxBi{M}{M}$. See appendix-\autoref{properties Aoe} and appendix-\autoref{norm mat and op} for details of the structure of $\MatFluxBi{M}{M}$ and $\Theta_M$, respectively. 
The use of Hermite polynomials as basis functions (and thus the growth in $\|\MatFluxBi{M}{M}\|_2$) is related to the velocity domain of the kinetic equation \eqref{BE} being unbounded. For kinetic equations with a bounded velocity space, it might be possible to have basis functions such that $\|\MatFluxBi{M}{M}\|_2$ does not grow with $M$, which would remove the additional sub-optimality in the Hermite-Sobolev indices of the spatial derivatives. As an example, consider the radiation transport equation for which the velocity space is a unit sphere and is thus bounded. A moment approximation can, therefore, be developed with the help of spherical harmonics and contrary to Hermite polynomials, the recursion relation of spherical harmonics is such that $\|\MatFluxBi{M}{M}\|_2\to 1$ as $M\to\infty$  \cite{Martin,PNIntro,Egger2016}. \autoref{compare norm} shows a comparison between the norm of $\MatFluxBi{M}{M}$ for a $\mbb S^2$ and a $\mbb R^3$ velocity domain. Clearly, as $M$ is increased, for a $\mbb S^2$ velocity space $\|\MatFluxBi{M}{M}\|_2$ approaches its limiting value of one whereas
for a $\mbb R^3$ velocity space $\|\MatFluxBi{M}{M}\|_2$ grows with $\mcal O(\sqrt{M})$. Thus for radiation transport, owing to the boundedness of $\|\MatFluxBi{M}{M}\|_2$ with $M$, we expect that one can entirely remove the second type of sub-optimality present in $\omega$, i.e., one can get a convergence rate which is the same as the Hermite-Sobolev indices. Such a result would be in agreement with the error estimates presented in \cite{Egger2016,Martin}.
\begin{figure}[ht!]
\centering
\subfigure {
\includegraphics[width=3in]{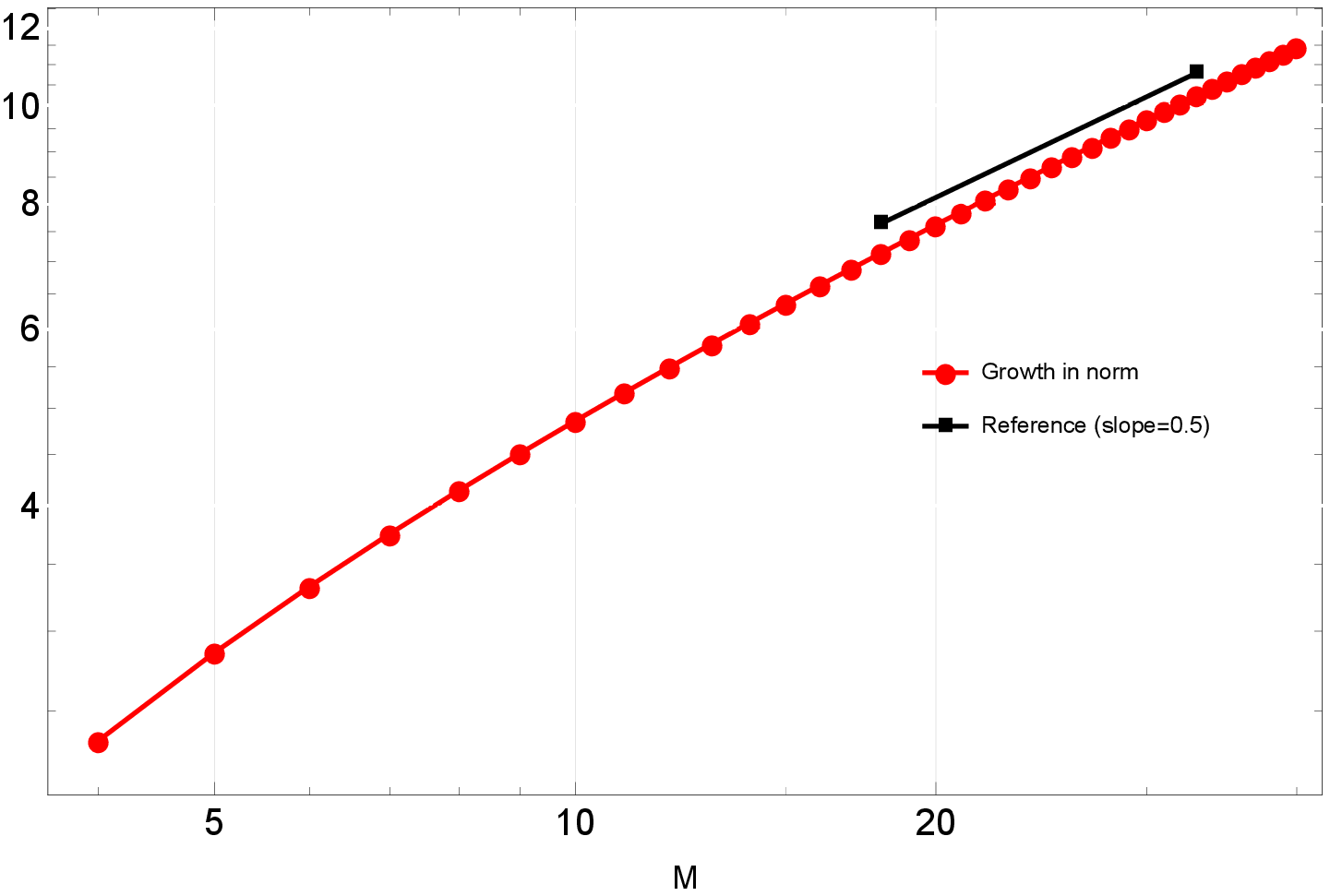} }
\hfill
\subfigure{
\includegraphics[width=3in]{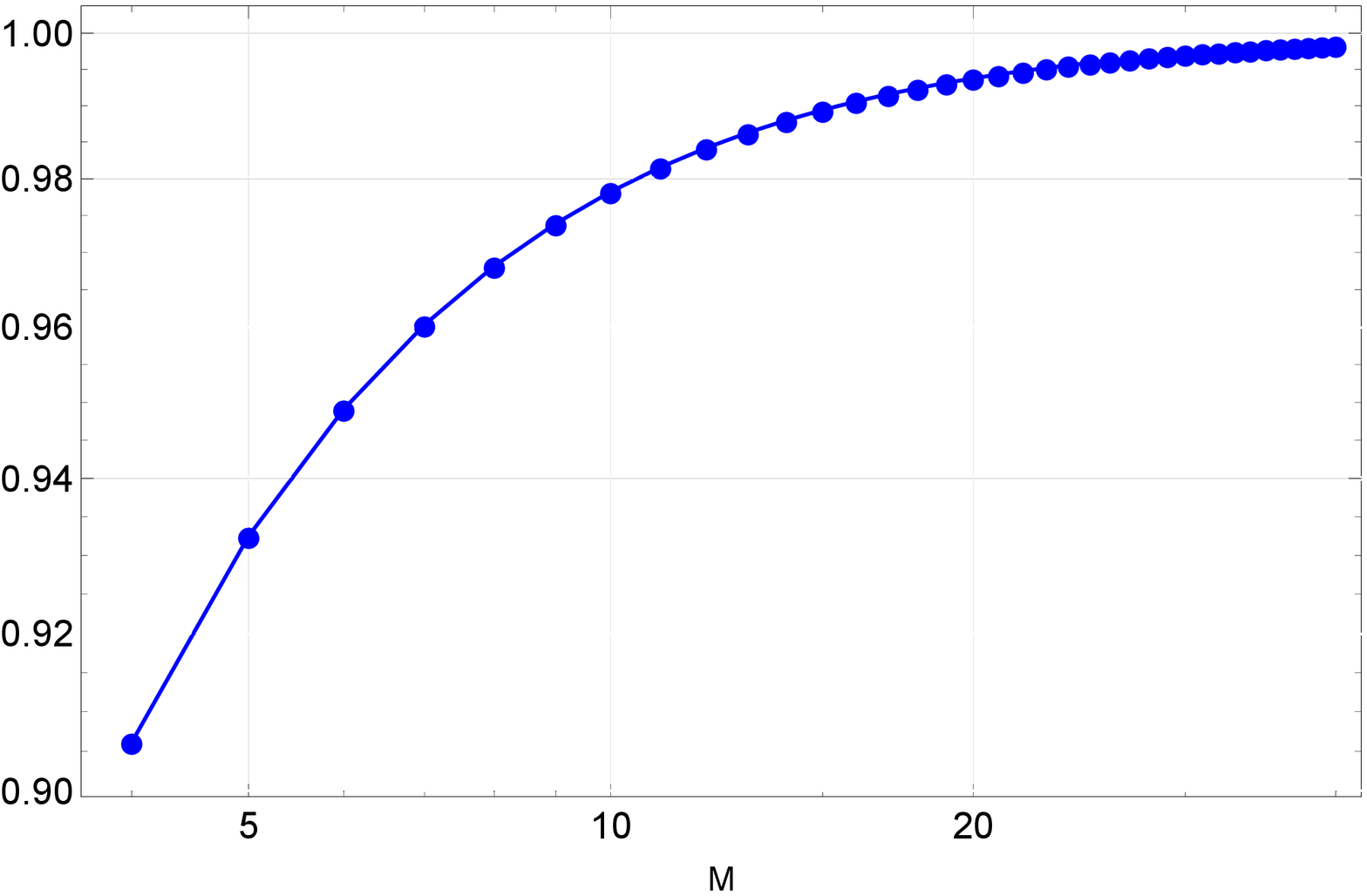}}	
\hfill
\caption{ growth in $\|\MatFluxBi{M}{M}\|_2$ with $M$ for: (i) left, $\mbb R^3$ velocity space and (ii) right, $\mbb S^2$ velocity space.}\label{compare norm}
\end{figure} 

\section{Examples: Linearised Boltzmann and BGK equations} \label{an example}
We give examples of kinetic equations which fall into the framework presented above. In particular, we discuss the conditions under which the linearised Boltzmann and the linearised BGK equation fall into our framework.  

With $\bar f:D\to \mbb R^+$, $(t,x,\xi)\mapsto \bar f(t,x,\xi)$, we denote the phase density function of a gas and we normalise $\bar f$ such that
the density ($\bar{\rho}$), the mean flow velocity ($\bar{v}$), and the temperature in energy units ($\bar{\theta}$) of the gas are given as: 
$
\bar{ \rho} = \int_{\mbb{R}^d} \bar f d\xi$, $
\bar{\rho} \bar{v} = \int_{\mbb{R}^d} \xi\bar f d\xi$, $
\bar{\rho} \bar{v}\cdot \bar{v} + d\bar{\rho} \bar{\theta} =\int_{\mbb{R}^d}\xi\cdot \xi\bar f d\xi.$
For convenience, we non-dimensionalise all quantities with some reference density $\rho_0$, temperature $\theta_0$ and length scale $L$.
The evolution of $\bar f$ is governed by the non-linear kinetic equation given as \cite{Struchtrupbook}
\begin{gather}
(1,\xi)\cdot \nabla_{(t,x)} \bar f = \frac{1}{\opn{Kn}}\bar Q(\bar f,\bar f), \label{non-linear BE}
\end{gather}
where $\opn{Kn}$ is the so-called Knudsen number which results from non-dimensionalisation, and $\bar Q$ is a non-linear collision operator. We consider $\bar Q$ to be either the Boltzmann or the BGK collision operator given as 
\begin{equation*}
\begin{aligned}
\text{Boltzmann Operator: } &\bar Q_{\opn{BE}}(\bar f,\bar f) = \int_{\mbb R^d}\int_{\mbb S^{d-1}}\mcal B(\xi-\xi_*,\kappa) \left(f(\xi^')f_0(\xi^'_*)-f(\xi)f_0(\xi_*)\right)d\kappa d\xi_*;\\
\text{BGK Operator: } &\bar Q_{\opn{BGK}}(\bar f,\bar f) = (\bar f_{\mcal M} - \bar f).
\end{aligned}
\end{equation*}
Above, the velocities $\xi_*^'$ and $\xi^'$ are post-collisional and result from the pre-collisional velocities $\xi_*$ and $\xi$.
The collision kernel ($\mcal B$) depends on the interaction potential between the gas molecules and is non-negative by physical assumptions. Moreover, $\bar f_{\mcal M}$ is a Maxwell-Boltzmann distribution function given as 
\begin{gather*}
\bar f_{\mcal M}(\xi ; \bar{\rho},\bar{v},\bar{\theta}) = \frac{\bar{\rho}}{\sqrt[d]{2\pi\bar{\theta}}}\exp\left[-\frac{(\xi-\bar{v})\cdot (\xi-\bar{v})}{2\bar{\theta}}\right].
\end{gather*}

For low Mach number flows, we assume $\bar f$ to be a small perturbation of a ground state $f_0 = \bar f_{\mcal M}(\xi ; \rho_{0},0,\theta_0)$, i.e. $\bar f = f_0 + \epsilon\sqrt{f_0} f$, where $\epsilon$ is some smallness parameter. Substituting the linearisation into the non-linear kinetic equation \eqref{non-linear BE} and considering only $\mcal O(\epsilon)$ terms, we find the evolution equation for $f$
\begin{gather}
(1,\xi)\cdot \nabla_{(t,x)}f = \frac{1}{\opn{Kn}} Q(f), \label{linearised BE}
\end{gather}
where $Q$ is the linearisation of $\bar Q_{\opn{BE}/\opn{BGK}}$ about $f_0$ and is given as 
\begin{equation*}
\begin{aligned}
\text{Linearised Boltzmann Operator: } &Q_{\opn{BE}}(\bar f) = \int_{\mbb R^d}\int_{\mbb S^{d-1}}\mcal B(\xi-\xi_*,\kappa)
\sqrt{f_0(\xi_*)f_0(\xi)}\\
&\hspace{1cm} \left(\frac{f(\xi^')}{\sqrt{f_0(\xi^')}}+\frac{f(\xi_*^')}{\sqrt{f_0(\xi_*^')}}-\frac{ f(\xi_*)}{\sqrt{f_0(\xi_*)}}-\frac{f(\xi)}{\sqrt{f_0(\xi)}}\right)d\kappa d\xi_*;\\
\text{Linearised BGK Operator: } &Q_{\opn{BGK}}(f) = (f_{\mcal M} - \bar f).
\end{aligned}
\end{equation*}
Above, $f_{\mcal M}\sqrt{f_0}$ is a linearisation of $\bar f_{\mcal M}$ about $f_0$ and is given as 
\begin{align}
f_{\mcal M}(\xi;\rho,v,\theta) := \left(\rho + v\cdot \xi + \frac{\theta}{2}\left(\xi\cdot \xi-3\right)\right)\sqrt{f_0(\xi)}, \label{equilibrium distribution}
\end{align}
where $\rho$, $v$ and $\theta$ are deviations of $\bar{\rho}$, $\bar{v}$ and $\bar{\theta}$ from their respective ground states.

We discuss whether the collision operators $Q_{\opn{BE}/\opn{BGK}}$ satisfy \autoref{assumption Q}. One can show that both $Q_{\opn{BE}/\opn{BGK}}$ are negative semi-definite and self-adjoint, and that $Q_{\opn{BGK}}$ is bounded on $L^2(\mbb R^d)$; see \cite{Carlos} for details. Thus $Q_{\opn{BGK}}$ satisfies \autoref{assumption Q}. Below in \autoref{bounded Q} we summarise the assumptions that make $Q_{\opn{BE}}$ a bounded operator, which results in $Q_{\opn{BE}}$ satisfying \autoref{assumption Q}. 

As compared to the general kinetic equation \eqref{BE}, our example of the linearised Boltzmann (or the BGK) equation \eqref{linearised BE} has an additional factor of $1/\opn{Kn}$, which scales the collision operator. From the bound on $\|e_M(t)\|_{\spaceXV}$ (in \eqref{om}) we find that such a scaling introduces a factor of $1/\sqrt{\opn{Kn}}$ in front of the term $\|Q\|^{\tfrac{1}{2}}A_2(T)$ appearing in the error bound.
An asymptotic analysis in terms of the Knudsen number can tell us how the error bound (or equivalently $A_2(T)$) behaves as the Knudsen number is chosen smaller and smaller. Authors in \cite{GradAsymp} conduct such an analysis for initial value problems. For initial boundary value problems, an asymptotic analysis is available only for the simplified Broadwell equation \cite{BCLayerBroadwell}. We hope to cover the asymptotic study of the error bound in our future work. 
Although the bound on $\|e_M\|_{\spaceXV}$ is sub-optimal in $\opn{Kn}$, the bound on $|e_M|_{Q}$ (given in \eqref{shanti}) is uniform in $\opn{Kn}$. However, the semi-norm $|e_M|_{Q}$ only quantifies the part of the error that is orthogonal to the null-space of $Q$, and it is unclear how to get a uniform in $\opn{Kn}$ bound for the error in the null-space of $Q$.

\begin{remark}\label{bounded Q}

Assume that we can split $Q_{\opn{BE}}$ as
\begin{align}
Q_{\opn{BE}}(f)(\xi) = \td Q(f)(\xi)-v(\xi)f(\xi),\hspB v(\xi) = \int_{\mbb R^3}\int_{\mbb S^2}\mcal B(\xi-\xi_*,\kappa) \sqrt{f_0(\xi_*)}d\kappa d\xi_*, \label{collision frequency}
\end{align}
where $v(\xi)\geq 0$ is the collision frequency and $\td Q$ is the remaining integral operator. The explicit form of $\td Q$ can be found in \cite{CutOffQ}. We can bound $Q$ on $\spaceV$ by bounding $\td Q$ and $v(\xi)$ on $\spaceV$ and $\mbb R^+$, respectively. 

We discuss assumptions that allow for the above splitting of $Q$, and for a bound on $\td Q$ and $v(\xi)$. Details related to our assumptions can be found in \cite{GradRGD,CutOffQ,Carlos}. Assuming an inverse power law potential, we express $\mcal B(\xi-\xi_*,\kappa)$ as 
\begin{gather*}   
\mcal B(\xi-\xi_*,\kappa) = \Psi(|\xi - \xi_*|)b(\cos \theta),\hspB \Psi(|\xi - \xi_*|) = |\xi - \xi_*|^\gamma,\hspB \gamma\in (-3,1],\hspB \cos\theta = \frac{\xi-\xi_*}{|\xi-\xi_*|}\cdot \kappa.
\end{gather*}
Assuming Grad's angular cut-off results in $\theta\mapsto b(\cos\theta)\in L^1([0,\pi])$. This makes $v(\xi)$ well-defined and allows us to split $Q$ as above \eqref{collision frequency}.
The operator $\td Q$ is bounded on $L^2(\mbb R^d)$ for $\gamma \in (-3,1]$. Moreover, $|v(\xi)|$ is bounded for all $\gamma \in (-3,0]$. Therefore, $Q_{\opn{BE}}$ is bounded on $L^2(\mbb R^d)$ for inverse power law potentials with an angular cut-off and $\gamma \in (-3,0]$.

\end{remark}

\section{Numerical Results}
Through numerical experiments, we validate the convergence rates presented in the earlier sections by comparing the observed convergence rate with the predicted one. The solution to our numerical experiment has moments that decay at a certain rate and hence we use the sharper estimate presented in \autoref{sharper main result}. With $f_{\opn{ref}}$ we denote the reference solution and we set $f_{\opn{ref}} = f_{M_{\opn{ref}}}$ with $M_{\opn{ref}}$ being sufficiently large.
To compute the observed convergence rate, which we denote by $\omega_{\opn{obs}}$, we first compute the moment approximation error through $E_M(T)=f_{\opn{ref}}(T)-f_M(T).$
Then, we compute $\omega_{\opn{obs}}$ as the slope of the linear curve that minimises the $L^2$ distance to the curve $(log(M),log(\|E_M(T)\|_{\spaceXV}))$. The predicted convergence rate, which we denote by $\omega_{\opn{pre}}$, follows from \autoref{sharper main result} and is given as 
 $$\omegaPred = \min\left\{k^{e/o}, k_t^{e/o},k_x^{e/o}-\frac{1}{2}\right\}.$$ 
To compute the different values of $k$ we first define the $L^2$ norms of the moments of $f_{\opn{ref}}$ and its derivatives 
 \begin{equation}
\begin{gathered}
N_m^{(x_i)} := \|\mom{m}(\pd_{x_i}f_{\opn{ref}})\|_{C^0([0,T];L^2(\Omega;\mbb R^{n(m)}))},\hspB N_m^{(t)} := \|\mom{m}(\pd_t f_{\opn{ref}})\|_{C^0([0,T];L^2(\Omega;\mbb R^{n(m)}))},\\ N_m := \|\mom{m}(f_{\opn{ref}})\|_{C^0([0,T];L^2(\Omega;\mbb R^{n(m)}))}. \label{def N}
\end{gathered}
\end{equation}
Let $s^o$ represent the slope of the linear curve that has the minimum $L^2$ distance to the curve\\
$(log(m),log(N_m^o))$ with $N_m^o$ being the same as $N_m$ but with a dependency on only the odd moments. We approximate $k^o$, and similarly the other $k$'s, by $k^o\approx s^o-1/2$. Once values of $k$ are known we can compute $\omega_{\opn{pre}}$ using the above expression. 
To quantify the discrepancy between the observed and the predicted convergence rates, we define
\begin{align*}
\errorOmega = \omegaObs-\omegaPred.
\end{align*}

For simplicity, we stick to a one dimensional physical and velocity space i.e., $d=1$ and $\Omega = (0,1)$.
To discretize the $1D$ physical space we use a discontinuous galerkin (DG) discretization with first-order polynomials and $500$ elements. For temporal discretization, we use a fourth-order explicit Runge-Kutta scheme. Our DG scheme is based upon a weak boundary implementation that preserves the stability of the moment approximation \eqref{stability fM} on a spatially discrete level; see \cite{Torrilhon2017} for details.
 Note that in \autoref{main result} we assumed $\Omega$ to be the half-plane but we can extend the analysis to $\Omega = (0,1)$ through the following argument.
The projection operator ($\projBC{M}$ in \autoref{def projector}) is defined with respect to the boundary conditions at $x=1$ and a similar projection operator can also be constructed for the boundary conditions at $x=0$. By taking a linear combination of the projection operation defined with respect to boundary conditions at $x=0$ and $x=1$, analogous results as those presented in \autoref{main result} (and \autoref{sharper main result}) can be obtained for $\Omega = (0,1)$. 

As initial data we consider
$
f_{I}(x,\xi) = \frac{\rho_I(x)}{\sqrt{2\pi}}\exp\left(-\frac{\xi^2}{2}\right)$ with $\rho_I(x) := \exp\left[-\left(x-0.5\right)^2\times 100\right]
$
which corresponds to a Gaussian density profile with all the higher order moments being zero. As boundary data we consider vacuum at both the ends ($x = 0$ and $x=1$) i.e., $\fIn = 0.$ As final time we consider $T = 0.3$, and we choose $M_{ref}=200$.

\autoref{decay 1D Inflow} shows the decay in the $L^2$ norm of the moments defined in \eqref{def N}, and the corresponding Hermite-Sobolev indices are given in \autoref{table: decay 1D}. The moments of the solution and its derivatives have a Hermite-Sobolev index that is close to $1.5$, which signifies that the reference solution is sufficiently regular along the velocity space. As expected, the moment approximation error decreases as the value of $M$ is increased; see \autoref{variation EM}. However, contrary to the previous results \cite{Torrilhon2015}, the convergence behaviour of the approximation error does not show any oscillations. 

\autoref{rate mom ref} shows the observed and the predicted convergence rate.
The observed approximation error converges with an order of $1.16$ and is under-predicted by a value of $0.19$. 
For the sake of validation, we also compute the convergence rates with the reference solution obtained through a discrete velocity method (DVM); see \cite{LucDVM} for details of a DVM. With DVM as the reference, we obtain
$
\omegaPred = 0.98,$ $\omegaObs = 1.15$ and $\errorOmega =\omegaObs-\omegaPred= 0.17
$
which is very similar to the results obtained with a moment reference solution \autoref{rate mom ref}. 

\begin{table}[ht!]
\begin{center}
\begin{tabular}{ c|c  }
 \hline
Quantity& Hermite-Sobolev index (= Decay Rate-$0.5$) \\
 \hline
$N_m$   & $1.8$ ($=k^e=k^o$)\\
\hline
$N_m^{(t)}$ &   $1.45$ ($=k^e_t=k^o_t$)\\ 
\hline
$N_m^{(x)}$ & $1.47 $ ($=k^e_x=k^o_x$)\\
\hline
\end{tabular}
 \caption{\textit{Hermite-Sobolev indices corresponding to the time integrated magnitude of moments defined in \eqref{def N}.} }\label{table: decay 1D}
\end{center}
\end{table}

\begin{table}[ht!]
\begin{center}
\begin{tabular}{ c|c|c|c  }
 \hline
Values of M& $\omegaPred$ & $\omegaObs$ & $\errorOmega = \omegaObs-\omegaPred$\\
 \hline
Odd  & $0.97$ & $1.16$ & $0.19$\\
\hline
Even &   $0.97$ & $1.16$ & $0.19$\\ 
\hline
\end{tabular}
 \caption{\textit{Observed and predicted convergence rates.}} \label{rate mom ref}
\end{center}
\end{table}

\begin{figure}[ht!]
\centering
\subfigure [decay of $N_m$.]{
\includegraphics[width=3in]{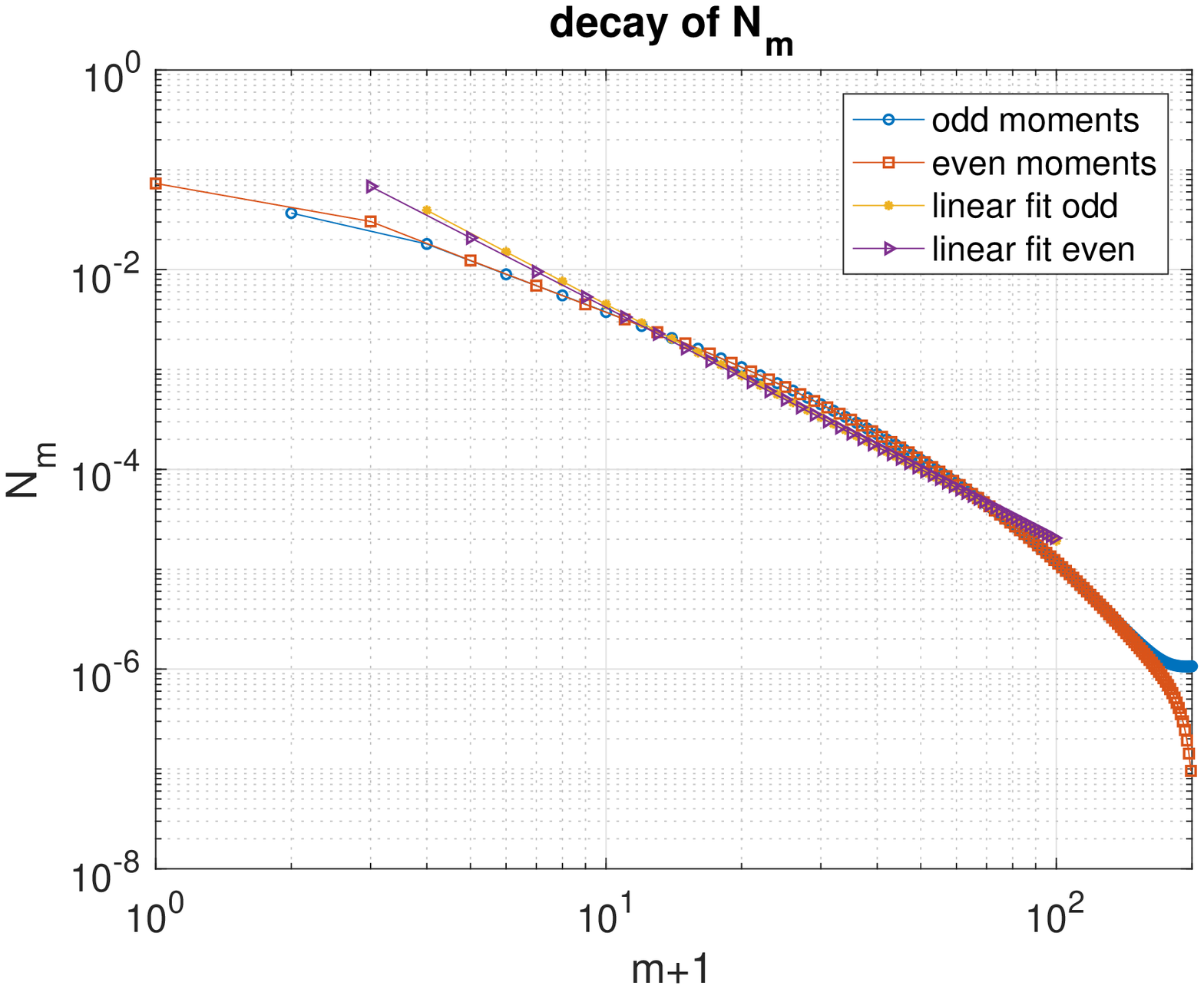} \label{NQ}}
\hfill
\subfigure [decay of $N_m^{(t)}$.]{
\includegraphics[width=3in]{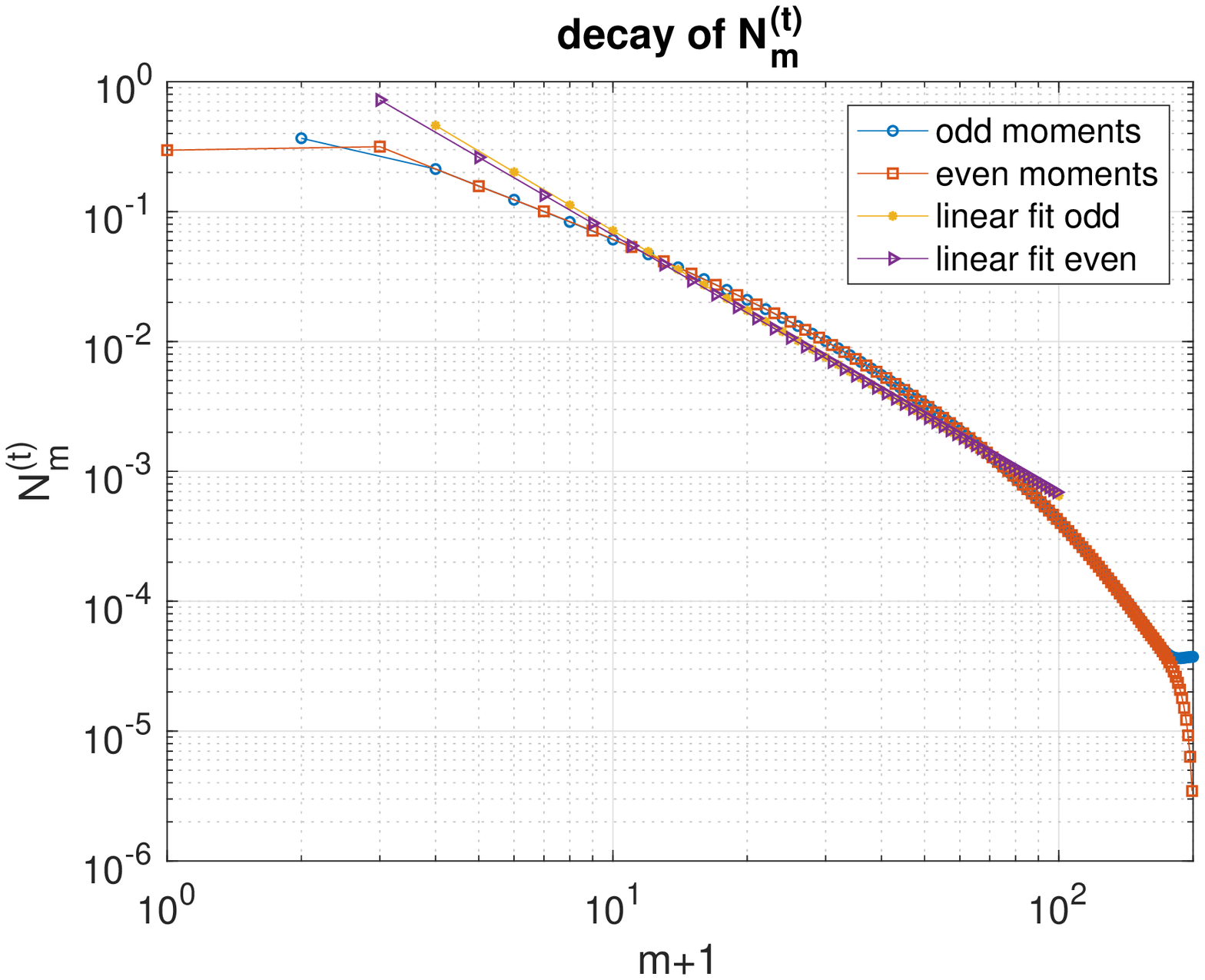} \label{NQDT}}
\hfill
\begin{center}
\subfigure [decay of $N_m^{(x)}$.]{
\includegraphics[width=3in]{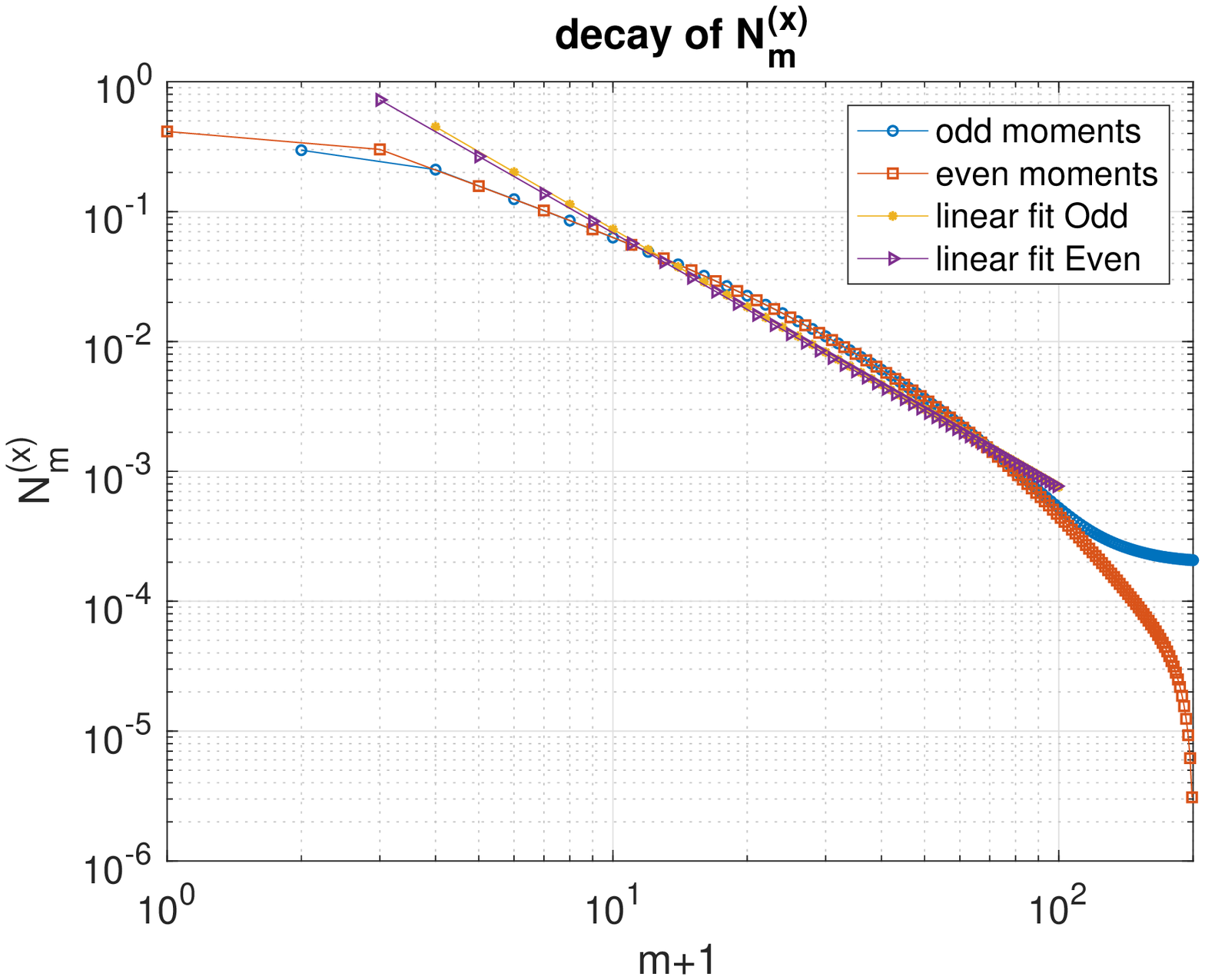} \label{NQDX}}
\end{center}
\hfill
\caption{\textit{Plots depict the decay of the various quantities, defined in \eqref{def N}, obtained through a refined moment approximation ($M=200$). All plots are on a log-log scale.}}	 \label{decay 1D Inflow}
\end{figure} 

\begin{figure}[ht!]
\begin{center}
\includegraphics[width=3in]{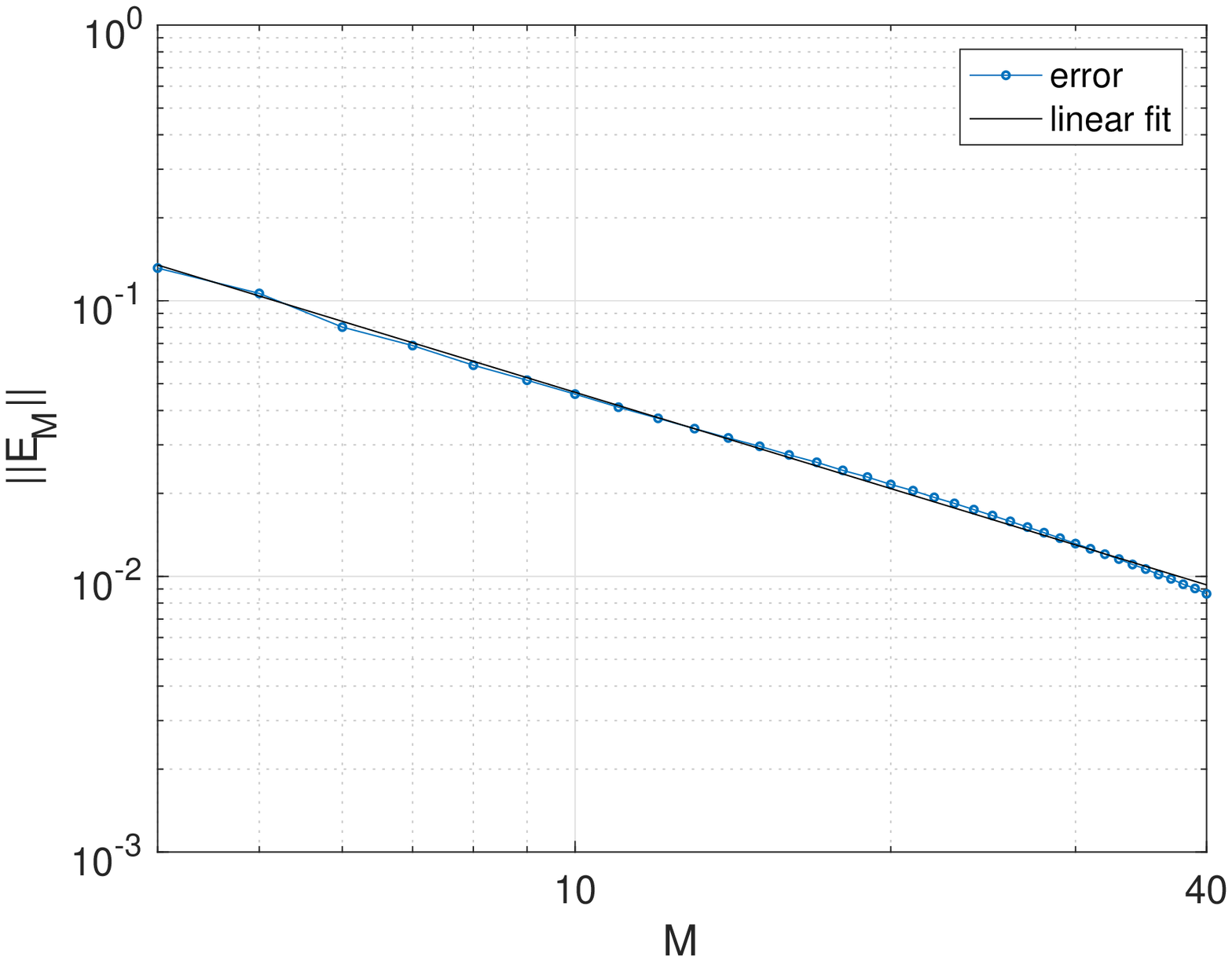} \label{EM}
\end{center}
\hfill
\caption{\textit{Decay of the approximation error, on a log-log scale, for different values of $M$.}}	 \label{variation EM}
\end{figure}

\begin{remark}
Authors in \cite{Martin} observed that moment decay rates computed using $f_{\opn{ref}}$ might show some artefacts for higher-order moments. To remove these artefacts we follow the methodology proposed in \cite{Martin}, i.e., we compute decay rates from only those values of $N_m$'s whose values computed through $M_{ref}$ and $M_{ref}-1$ differ by less than $3$ percent.
\end{remark}

\section{Conclusion}
Using a Galerkin type approach, under certain regularity assumptions on the solution, the global convergence of Grad's Hermite approximation to a linear kinetic equation was proved. 
The speed of convergence was quantified by proving convergence rate which, as was expected, depends on the velocity space Sobolev regularity of the solution.
The proposed convergence rate was found to be sub-optimal, in the sense that it is one order lower than the convergence rate of the best-approximation in the Galerkin spaces under consideration. Growth in the norm of the Jacobian corresponding to the flux of moment equations was found to be the reason for this sub-optimality.
For validation of the proven convergence rate, a numerical experiment involving the linearised BGK-equation was conducted. For a moderately high Knudsen number ($\opn{Kn} = 0.1$), the observed convergence rate matched with the predicted convergence rate with acceptable accuracy. 
 
 \section{Acknowledgements}
JG thanks the Baden-Wuerttemberg foundation for support via the project 'Numerical Methods for Multi-phase Flows with Strongly Varying Mach Numbers'. NS and MT thanks to the funding by the Deutsche Forschungsgemeinschaft (DFG, German Research 
Foundation), Project number: \\
320021702/GRK2326, Project Name: Energy, Entropy, and 
Dissipative Dynamics (EDDy).

\begin{appendices}
\section{Proof of Lemma 2.1}\label{proof lemma2.1}
By splitting the integral over $\xi_1$, we find $
\lan \CollectBasisO{M}\sqrt{f_0}, r\ran_{\spaceV} = \lan \CollectBasisO{M}\sqrt{f_0}, r\ran_{L^2(\mbb R^+\times\mbb R^{d-1})} + \frac{1}{2}\bcInhomo(r).
$ Expressing $r$ as $r = r^e + r^o$ and using $\lan\CollectBasisO{M}\sqrt{f_0}, r^e\ran_{\spaceV} = 0$ in the previous expression, we find the desired result. To derive an expression equivalent to \eqref{int ro re}, we express $r^o$ and $r^e$ as 
$
r^o = \sum_{m=1}^{\infty} \momO{m}(r)\cdot \basisO{m}\sqrt{f_0(\xi)}$ and $ r^e = \sum_{m=0}^{\infty} \momE{m}(r)\cdot \basisE{m}\sqrt{f_0(\xi)}
$ respectively
and replace these expansion in \eqref{int ro re} to find $
\CollectMomO{M}(r)  = \lim_{q\to\infty} \MatHalfBi{M}{q}\CollectMomE{q}(r) + \bcInhomo(r)$. 

We consider $\lim_{q\to\infty} \MatHalfBi{M}{q}$ to be an operator defined over $l^2$ in the sense of 
$$(\lim_{q\to\infty} \MatHalfBi{M}{q}) x:= (\lim_{q\to\infty} \MatHalfBi{M}{q} x),\hsp\forall\hsp x\in l^2.$$
We now show that $\lim_{q\to\infty} \MatHalfBi{M}{q}$ is well defined on $l^2$ which is equivalent to showing that the limit $\lim_{q\to\infty} \MatHalfBi{M}{q} x$ is well defined. Let $x\in l^2$ and let $x^q
\in\mbb{ R}^q$ be a vector containing the first $q$ elements of $x$. To extend $x^q$ by zeros, we additionally define $\bar{x}^q\in l^2$ which has the same first $q$ elements as $x$ and whose all the other elements are zero. 
From the definition of $\MatHalfBi{M}{q}$ (i.e. \autoref{def B}) we find 
$
 \MatHalfBi{M}{q}x^q = 2\lan\CollectBasisO{M}\sqrt{f_0}, g^q\ran_{L^2(\mbb R^+\times\mbb R^{d-1})}$ where $g^q = (\CollectBasisE{q}\cdot x^q) \sqrt{f_0}.
$
Trivially, $\bar{x}^q$ converges to $x$ in $l^2$. This implies that $g^q$ converges in $\spaceV$. Then, by the continuity of the inner product of $L^2(\mbb R^+\times \mbb R^{d-1})$, we have the convergence of $\MatHalfBi{M}{q}x^q$ in $\mbb R^{\numOddTot{M}}$.

 \section{Structure of $\MatFluxBi{M}{M}$} \label{properties Aoe}
We discuss in detail the structure of $\MatFluxBi{M}{M}$ which will be needed for the proof of \autoref{normB}.
From the definition of $\MatFluxBi{M}{M}$ it is clear that it contains blocks of the integral\\
$
 D^{(k,l)} = \lan \psiO{k}\sqrt{f_0},\xi_1\psiE{l}^'\sqrt{f_0}\ran_{\spaceV}$ and $ D^{(M,M+1)} = 0
$
where the second relation is a result of only considering basis functions upto degree $M$ in our moment approximation \eqref{petrov galerkin}.
Recursion of the Hermite polynomials \eqref{recursion} provides
$
 \psiO{k}\xi_1 = d^{(k,k-1)} \psiE{k-1} + d^{(k,k+1)} \hat{\psi}_{k+1}^e,
$
 where $\hat{\psi}_{k+1}^e$ is vector containing the first $\numOdd(k)$ components of $\psi_{k+1}^e$. Moreover, matrices $d^{(k,k-1)},d^{(k,k+1)}\in\mbb R^{\numOdd(k)\times \numOdd(k)}$ are diagonal matrices containing the square root entries appearing in the recursion relation. 
Using orthogonality of basis functions, we express $D^{(k,l)}$ as
  \begin{align}
D^{(k,l)} = \begin{cases}
d^{(k,k-1)} \intV{\psiE{k-1}\psiE{k-1}^'f_0} = d^{(k,k-1)},&\quad l = k-1\\ d^{(k,k+1)}\intV{\hat{\psi}_{k+1}^e\left(\psiE{k+1}\right)'f_0} = \left(\begin{array}{c c}
d^{(k,k+1)}& 0
 \end{array}\right),&\quad  l = k+1 \\
 0,&\quad \text{else} \label{simple Dkl}
 \end{cases}
 \end{align}
Note that $D^{(k,k-1)} \in \mbb R^{\numOdd(k)\times (\numEven(k-1))}$, where $\numEven(k-1) = \numOdd(k)$, whereas $D^{(k,k+1)}\in \mbb R^{\numOdd(k)\times\numEven(k+1)}$.   
Since, $n_e(k) = n_o(k+1)$, $\MatFluxBi{M}{M}$ consists of blocks of $D^{(k,k-1)}$ on its main diagonal and blocks of $D^{(k,k+1)}$ on its off diagonal with no entries below the main diagonal. 
From the recursion of Hermite polynomials \eqref{recursion}, we conclude 
\begin{gather}
d_{ii}^{(k,k-1)} = \sqrt{\left(\beta_k^{(1,o)}\right)_i},\quad d_{ii}^{(k,k+1)} = \sqrt{\left(\beta_k^{(1,o)}\right)_i+1},\quad i \in \{1,\dots,\numOdd(k)\}. \label{def d}
\end{gather}
where $\beta_k^{(1,o)}$ is as defined below
\begin{definition} \label{def betako}
Let $\beta_k^o\in \mbb R^{\numOdd(k)\times d}$ be such that each row of $\beta_k^o$ contains the multi-index of the odd basis functions contained in $\psiO{k}$. Moreover, let $\beta_k^{(1,o)}\in\mbb R^{\numOdd(k)}$ represent the first column of $\beta_k^o$.
\end{definition}


Note that all the entries in $\beta_k^{(1,o)}$ are odd.
Therefore, all the entries along the diagonal of $d^{(k,k+1)}$ and $d^{(k,k-1)}$ are square roots of even and odd numbers respectively. 
 It can be shown that the number of times one appears in $\beta_k^{(1,o)}$ is equal to $k+2$. Thus, $d^{(k,k-1)}$ has the structure 
\begin{align}
d^{(k,k-1)} = \left(
\begin{array}{c c}
\tilde{d}^{(k,k-1)} & 0\\
0 & I^{k+2}
\end{array}
\right)\label{struct d} 
\end{align} 
where $\tilde{d}^{(k,k-1)}\in\mbb R^{(n_o(k)-(k+2))\times(n_o(k)-(k+2))}$ and $I^{k+2}$ is an identity matrix of size $(k+2)\times (k+2)$.
From \eqref{simple Dkl}, \eqref{def d} and \eqref{struct d} we can conclude that
\begin{align}
D^{(k,k-1)} = \left(
\begin{array}{c c}
\tilde{d}^{(k,k-1)} & 0\\
0 & I^{k+2}
\end{array}
\right),\quad D^{(k,k+1)} = \left(\begin{array}{c c}
d^{(k,k+1)} ,\quad 0
\end{array}\right). \label{struct D}
\end{align}

\noindent The matrix $\MatFluxBi{M}{M-1}$, which can be constructed by ignoring the contribution from $D^{(M-1,M)}$ into $\MatFluxBi{M}{M}$, is upper triangular with blocks of $D^{(k,k-1)}$ along its diagonal. Since $D^{(k,k-1)}$ contains square roots of odd numbers along its diagonal, which are all non-zero, the invertibility of $\MatFluxBi{M}{M-1}$ follows.

\section{Norms of Matrices and Operators} \label{norm mat and op}
We will need the result
\begin{lemma} \label{norm of sys}
Let $A\in \mbb R^{n\times n}$, $n\geq 1$, be given by
$
A_{ij} = \sqrt{2i-1}\delta_{ij} + \sqrt{2i}\delta_{(i+1)j} $.
Then the solution $x\in\mbb R^n$ to the linear system 
\begin{align}
A_{ij}x_j = \delta_{in} \label{sys}
\end{align}
is such that $\|x\|_{l^2}=1$.
\end{lemma}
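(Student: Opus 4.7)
The matrix $A$ is upper bidiagonal (nonzero entries only when $j=i$ or $j=i+1$), so the system $Ax = e_n$ can be solved by straightforward back-substitution. The plan is: first obtain an explicit recursion for the entries $x_i$, then look for a telescoping identity that lets the sum $\sum_i x_i^2$ collapse to a closed form.

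From the last row I would read off $x_n = 1/\sqrt{2n-1}$, and from row $i < n$ the relation $\sqrt{2i-1}\,x_i + \sqrt{2i}\,x_{i+1} = 0$, which squared gives the clean recursion
\begin{equation*}
(2i-1)\,x_i^2 \;=\; 2i\,x_{i+1}^2 \qquad (1 \le i \le n-1).
\end{equation*}
The key observation is that this can be rewritten as $(2i-1)\,x_i^2 = \big((2i+1)-1\big)\,x_{i+1}^2$, i.e.
\begin{equation*}
x_{i+1}^2 \;=\; (2i+1)\,x_{i+1}^2 \;-\; (2i-1)\,x_i^2,
\end{equation*}
which is exactly the telescoping form I am after.

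Summing from $i=1$ to $i=n-1$, the right-hand side collapses to $(2n-1)\,x_n^2 - (2\cdot 1 - 1)\,x_1^2 = 1 - x_1^2$, using the boundary value $x_n^2 = 1/(2n-1)$. The left-hand side is $\sum_{i=2}^{n} x_i^2$, so
\begin{equation*}
\sum_{i=2}^{n} x_i^2 \;=\; 1 - x_1^2 \quad\Longrightarrow\quad \|x\|_{l^2}^2 \;=\; \sum_{i=1}^{n} x_i^2 \;=\; 1.
\end{equation*}

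I do not anticipate any serious obstacle: the matrix is triangular and the recursion is two-term, so the only mildly non-obvious step is spotting the rewrite $(2i-1) = (2i+1) - 1$ that turns the pointwise recursion into a telescoping sum. The base case $n=1$ is immediate since then $A = (1)$ and $x_1 = 1$.
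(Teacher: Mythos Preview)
Your proof is correct and genuinely different from the paper's. The paper solves the recursion explicitly to obtain $x_p = (-1)^{p-1}\sqrt{(2p-3)!!/(2p-2)!!}\,x_1$, computes $\|x\|_{l^2}^2$ as $x_1^2$ times a sum of double-factorial ratios, determines $x_1$ from the last equation, and then finishes by proving the combinatorial identity $\sum_{p=0}^{n-1}1/(2^p p!) = (2n-1)!!/(2n-2)!!$ by induction. Your telescoping argument bypasses all of this: by squaring the two-term recursion and rewriting $(2i-1)x_i^2 = 2i\,x_{i+1}^2$ as $x_{i+1}^2 = (2i+1)x_{i+1}^2 - (2i-1)x_i^2$, you never need the closed form for $x_i$ or any auxiliary identity. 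The paper's approach has the minor advantage of giving the individual entries explicitly, but for the stated lemma your route is shorter and more elementary.
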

\begin{proof}
For $n=1$, the result is trivial and so we consider the $n > 1$ case.  
From the first $n-1$ equations of the linear system \eqref{sys} it follows 
$x_i\sqrt{2i-1} + x_{i+1}\sqrt{2i} = 0$, $ i \in \{1,2,\dots n-1\}$,
with which we can express any $x_p$ ($p\geq 2$) in terms of $x_1$ as
\begin{align}
x_p = (-1)^{p-1}\prod_{k = 1}^{p-1}\sqrt{\frac{2k-1}{2k}}x_1 = (-1)^{p-1}\sqrt{\frac{(2p-3)!!}{(2p-2)!!}}x_1,\quad p\in \{2,\dots n\}. \label{general xp}
\end{align}
Thus
\begin{align}
\|x\|_{l^2}^2 = x_1^2\left(1 + \sum_{p=2}^n\frac{(2p-3)!!}{(2p-2)!!}\right) = x_1^2\sum_{p=0}^{n-1}\frac{1}{2^pp!}. \label{norm x}
\end{align}
From the last equation in 
\eqref{sys} and using \eqref{general xp} we have
$
x_n = 1/{\sqrt{2n-1}}$ which implies\\ $ x_1 = (-1)^{n-1}\sqrt{(2n-2)!!/{(2n-1)!!}}.$ 
Using the expression for $x_1$ in \eqref{norm x}, we find 
\begin{gather*}
\|x\|_{l^2}^2 = \frac{(2n-2)!!}{(2n-1)!!}\sum_{p=0}^{n-1}\frac{1}{2^pp!}.
\end{gather*}
Finally, induction provides
$
\sum_{p=0}^{n-1}1/(2^pp!) = (2n-1)!!/(2n-2)!!$ which implies $\|x\|_{l^2}^2 = 1.
$
\end{proof}
\begin{enumerate}[label = (\roman*)]

\item \textit{Norm of $\lim_{q\to\infty}\MatHalfBi{M}{q}$:}
Let $L = \lim_{q\to\infty}\MatHalfBi{M}{q}$ which is well-defined on $l^2$ due to \autoref{lemma:int split}.
Define $y\in\mbb R^{\Xi_o^M}$ as 
$
y = Lx = 2\lan \CollectBasisO{M}f_0 ,r \ran_{\spaceVPlus}$ where $r = \sum_{m = 0}^{\infty
}x_m \cdot\basisE{m}f_0$, $
x = \left(x_0^',x_1^',\dots,x_k^',\dots  \right)'$ and $x_k\in\mbb R^{\numEven(k)}.
$
Functions $\sqrt{2}\basisE{i}f_0$ are orthonormal under $\lan.,.\ran_{\spaceVPlus}$. This implies $\|r\|^2_{\spaceVPlus} = \frac{1}{2}\|x\|^2_{l^2}$.
Orthogonal projection of $r$ onto $\{\sqrt{2}\basisO{m}f_0\}_{m\leq M}$ can be given as
$
\mcal Pr = \sum_{m = 1}^M y_m\cdot\basisO{m}f_0$ where $
y = \left(y_1^',y_2^',\dots,y_M^' \right)'$ and $y_k\in\mbb R^{\numOdd(k)}.
$
Therefore, it holds 
$
\|\mcal Pr\|_{\spaceVPlus} \leq \|r\|_{\spaceVPlus}.
$
Since 
$
\|\mcal Pr\|_{\spaceVPlus}^2 = \|y\|^2_{l^2}/2$ and $\|r\|^2_{\spaceVPlus} = \|x\|^2_{l^2}/2
$, 
we obtain 
$
\|y\|_{l^2}^2 \leq \|x\|^2_{l^2}
$ which provides $\|L\|\leq 1$.
\item \textit{Norm of $\MatFluxBi{M}{M}$ :}
Let $A = \MatFluxBi{M}{M}\left(\MatFluxBi{M}{M}\right)^'$. Since every row of $\MatFluxBi{M}{M}$ contains two entries, one on the main diagonal and one on the off diagonal (see appendix-\autoref{properties Aoe}), every row of $A$ will contain a maximum of three entries. Since the maximum magnitude of entries in $\MatFluxBi{M}{M}$ is $\mcal O(\sqrt{M})$, the maximum magnitude of the entries, in $A$, will be $\mcal O(M)$. The Gerschgorin's circle theorem then implies that the maximum eigenvalue of $A$ will be $\mcal O(M)$ which implies $\|\MatFluxBi{M}{M}\|_2 \leq C \sqrt{M}$.
\item \textit{Norm of $\|\left(\MatFluxBi{M}{M-1}\right)^{-1}\MatFluxSm{M}{M}\|_2$ :}
In the coming discussion we will assume $M$ to be even; for $M$ being odd, the proof follows along similar lines and will not be discussed for brevity. 
From the definition of $\MatFluxSm{M}{M}$ it is clear that it only has a contribution from $D^{(M-1,M)}\in\mbb R^{\numOdd(M-1)\times\numEven(M)}$, with $D^{(M-1,M)}$ as defined in \eqref{struct D}. 
Let $X \in \mbb R^{\numOddTot{M}\times \numOdd(M-1)}$ represent those columns of $\left(\MatFluxBi{M}{M-1}\right)^{-1}$ which get multiplied with $D^{(M-1,M)}$ appearing in $\MatFluxSm{M}{M}$. As a result 
$
\|\left(\MatFluxBi{M}{M-1}\right)^{-1}\MatFluxSm{M}{M}\|_2 = \|XD^{(M-1,M)}\|_2  \leq \|X\|_2\|D^{(M-1,M)}\|_2.
$
 From \eqref{def d} it follows that $\|D^{(M-1,M)}\|_2\leq C\sqrt{M}$. We show that $X$ is unitary which proves our claim.

\noindent
Let $x^{(\omega)}$ denote the $\omega$-th column of $X$ with $\omega\in \{1,\dots, \numOdd(M-1)\}$. We decompose $x^{(\omega)}$ as
$
x^{(\omega)} = \left(\left(x^{(\omega)}_{n_e(0)}\right)',\left(x^{(\omega)}_{n_e(1)}\right)',\dots,\left(x^{(\omega)}_{n_e(M-1)}\right)'\right)$ where $ x^{(\omega)}_{\numEven(q)}\in \mbb R^{\numEven(q)}.
$
Different values of $x^{(\omega)}$, for different values of $\omega$, can be found by solving the system of equations (which results from $\MatFluxBi{M}{M-1}\left(\MatFluxBi{M}{M-1}\right)^{-1} = I$)
\begin{gather}
D^{(k,k-1)}x^{(\omega)}_{n_e(k-1)} + D^{(k,k+1)}x^{(\omega)}_{n_e(k+1)} = 0\hspB
D^{(M,M-1)}x^{(\omega)}_{n_e(M-1)} = 0,  \label{solveXj1}\\
D^{(M-1,M-2)}x^{(\omega)}_{n_e(M-2)} = I^{\numOdd(M-1)}_{\omega},
 \label{solveXj2}
\end{gather}
where $I^{\numOdd(M-1)}_{\omega}$ is a diagonal matrix of size $\numOdd(M-1)\times \numOdd(M-1)$ such that $\left(I^{\numOdd(M-1)}_{\omega}\right)_{ii}=\delta_{i\omega}$ and $D^{(k,k-1)}$ (and $D^{(k,k+1)}$) are as defined in \eqref{struct D}. From \eqref{solveXj1} we conclude 
$
x^{(\omega)}_{n_e(M-1)} = 0$ which implies $x^{(\omega)}_{n_e(M-(2q-1))} = 0$, $\forall q\in \{1,\dots \frac{M}{2}\}. 
$
We express the set of remaining equations as
\begin{equation}
\begin{aligned}
D^{(k,k-1)}x^{(\omega)}_{n_e(k-1)} + D^{(k,k+1)}x^{(\omega)}_{n_e(k+1)}& = 0,\hsp \forall\hsp k\in \{1,3,\dots,M-3\} \label{reduced solveXj}\\ 
D^{(M-1,M-2)}x^{(\omega)}_{n_e(M-2)} &= I^{\numOdd(M-1)}_{\omega}
\end{aligned}
\end{equation}
Orthogonality of solutions to \eqref{reduced solveXj} is clear from the structure of the linear system itself. Therefore, 
to prove our claim we need to show that 
\begin{gather}
\|x^{(\omega)}\|_{l^2} = 1\hsp \forall\hsp \omega\in\{1,\dots n_o(M-1)\},\label{normality}
\end{gather}
for which we will claim that solving \eqref{reduced solveXj} for a given $\omega$ is equivalent to solving a system of the type \eqref{sys}; the result will then follow from \autoref{norm of sys}. From the entries of $d^{(k,k-1)}$ and $d^{(k,k+1)}$ defined in \eqref{def d}, it follows that the system in \eqref{reduced solveXj} is equivalent to
\begin{align}
\left(\begin{array}{c c c c c c}
1 & \sqrt{2} & 0 & 0 & \dots & \dots\\
0 & \sqrt{3} & \sqrt{4} & 0 & \dots & \dots\\
0 & 0          &  \ddots   & \ddots & 0 & \dots \\
0 & 0 		   & 0 & \dots & \sqrt{(\beta_{M-1}^{(1,o)})_j-2} &  \sqrt{(\beta_{M-1}^{(1,o)})_j-1} \\
0 & 0 		   & 0 & \dots & \dots &  \sqrt{(\beta_{M-1}^{(1,o)})_j} 
\end{array}\right)\left(\begin{array}{c}
\left( x^{(\omega)}_{n_e(M-2q)}\right)_j \\ 
\left( x^{(\omega)}_{n_e(M-2(q-1))}\right)_j \\
\vdots\\
\left( x^{(\omega)}_{n_e(M-2)}\right)_j
\end{array}\right) = \left(\begin{array}{c}
0\\
0\\
0\\
0 \\
\vdots\\
\delta_{j,\omega}
\end{array}\right)  \label{final solveXj}
\end{align}
where $\beta_{k}^{(1,o)}$ is as defined in \autoref{def betako}, $q = \left(\left(\beta_{M-1}^{(1,o)}\right)_j+1\right)/2$ and for every $\omega$,\\  $j\in \{1,\dots ,\numOdd(M-1)\}$. For $j=\omega$, the system in \eqref{final solveXj} is the same as \eqref{sys} and hence \eqref{normality} follows. 
\end{enumerate}

\end{appendices}

\bibliographystyle{abbrv}
\bibliography{paper}

\begin{thebibliography}{10}

\bibitem{abstractKinetic}
R.~Beals and V.~Protopopescu.
\newblock Abstract time-dependent transport equations.
\newblock {\em Journal of Mathematical Analysis and Applications}, 121(2):370
  -- 405, 1987.

\bibitem{PNIntro}
T.~A. Brunner and J.~P. Holloway.
\newblock Two-dimensional time dependent {R}iemann solvers for neutron
  transport.
\newblock {\em Journal of Computational Physics}, 210(1):386 -- 399, 2005.

\bibitem{CaiNRXX}
Z.~Cai and R.~Li.
\newblock Numerical regularized moment method of arbitrary order for
  {B}oltzmann-{B}{G}{K} equation.
\newblock {\em SIAM Journal on Scientific Computing}, 32(5):2875--2907, 2010.

\bibitem{Carlos}
C.~{C}ercignani.
\newblock {\em The Boltzmann Equation and Its Applications}.
\newblock Springer, 67 edition, 1988.

\bibitem{Christian}
R.~{C}hristian.
\newblock Numerical methods for the semiconductor {B}oltzmann equation based on
  spherical harmonics expansions and entropy discretizations.
\newblock {\em Transport Theory and Statistical Physics}, 31(4-6):431--452,
  2002.

\bibitem{Douglas78}
J.~Douglas, T.~Dupont, and M.~F. Wheeler.
\newblock A quasi-projection analysis of {G}alerkin methods for parabolic and
  hyperbolic equations.
\newblock {\em Mathematics of Computation}, 32(142):345--362, 1978.

\bibitem{CutOffQ}
H.~B. Drange.
\newblock The linearized {B}oltzmann collision operator for cut-off potentials.
\newblock {\em SIAM Journal on Applied Mathematics}, 29(4):665--676, 1975.

\bibitem{Douglas73}
T.~Dupont.
\newblock L2-estimates for {G}alerkin methods for second order hyperbolic
  equations.
\newblock {\em SIAM Journal on Numerical Analysis}, 10(5):880--889, 1973.

\bibitem{Egger2012}
H.~Egger and M.~Schlottbom.
\newblock A mixed variational framework for the radiative transfer equation.
\newblock {\em Mathematical Models and Methods in Applied Sciences},
  22(03):1150014, 2012.

\bibitem{Egger2016}
H.~Egger and M.~Schlottbom.
\newblock A class of galerkin schemes for time-dependent radiative transfer.
\newblock {\em SIAM Journal on Numerical Analysis}, 54(6):3577--3599, 2016.

\bibitem{ExistenceLinearisedBE}
L.~Falk.
\newblock Existence of solutions to the stationary linear {B}oltzmann equation.
\newblock {\em Transport Theory and Statistical Physics}, 32(1):37--62, 2003.

\bibitem{Martin}
M.~Frank, C.~Hauck, and K.~Kupper.
\newblock Convergence of filtered spherical harmonic equations for radiation
  transport.
\newblock {\em Commun. Math. Sci}, 14(5):1443--1465, 2016.

\bibitem{GambaPetrov}
I.~M. Gamba and S.~Rjasanow.
\newblock Galerkin-{P}etrov approach for the {B}oltzmann equation.
\newblock {\em Journal of Computational Physics}, 366:341 -- 365, 2018.

\bibitem{Grad1949}
H.~{G}rad.
\newblock {On the kinetic theory of rarefied gases}.
\newblock {\em Communications on Pure and Applied Mathematics}, 2(4):331--407,
  1949.

\bibitem{GradRGD}
H.~Grad.
\newblock Asymptotic theory of the {B}oltzmann equation. {II}.
\newblock {\em Pros. 3rd Internat. Sympos., Palais de l'UNESCO, Paris, 1962},
  1:26--59, 1962.

\bibitem{GradAsymp}
H.~Grad.
\newblock Asymptotic theory of the {B}oltzmann equation.
\newblock {\em The Physics of Fluids}, 6(2):147--181, 1963.

\bibitem{BCLayerBroadwell}
J.-G. Liu and Z.~Xin.
\newblock Boundary-layer behavior in the fluid-dynamic limit for a nonlinear
  model {B}oltzmann equation.
\newblock {\em Archive for Rational Mechanics and Analysis}, 135(1):61--105,
  Oct 1996.

\bibitem{LucDVM}
L.~Mieussens.
\newblock Discrete-velocity models and numerical schemes for the boltzmann-bgk
  equation in plane and axisymmetric geometries.
\newblock {\em Journal of Computational Physics}, 162(2):429 -- 466, 2000.

\bibitem{Rana2016}
A.~S. {R}ana and H.~{S}truchtrup.
\newblock {Thermodynamically admissible boundary conditions for the regularized
  13 moment equations}.
\newblock {\em Physics of Fluids}, 28(2):027105, 2016.

\bibitem{MomentsToDVM}
C.~Ringhofer, C.~Schmeiser, and A.~Zwirchmayr.
\newblock Moment methods for the semiconductor {B}oltzmann equation on bounded
  position domains.
\newblock {\em SIAM Journal on Numerical Analysis}, 39(3):1078--1095, 2001.

\bibitem{Sarna2018}
N.~Sarna and M.~Torrilhon.
\newblock Entropy stable {H}ermite approximation of the linearised {B}oltzmann
  equation for inflow and outflow boundaries.
\newblock {\em Journal of Computational Physics}, 369:16 -- 44, 2018.

\bibitem{Sarna2017}
N.~Sarna and M.~Torrilhon.
\newblock On stable wall boundary conditions for the {H}ermite discretization
  of the linearised {B}oltzmann equation.
\newblock {\em Journal of Statistical Physics}, 170(1):101--126, Jan 2018.

\bibitem{ConvergenceMoments}
C.~Schmeiser and A.~Zwirchmayr.
\newblock Convergence of moment methods for linear kinetic equations.
\newblock {\em SIAM Journal on Numerical Analysis}, 36(1):74--88, 1998.

\bibitem{Struchtrupbook}
H.~{S}truchtrup.
\newblock {\em {Macroscopic Transport Equations for Rarefied Gas Flows}}.
\newblock Springer Ltd, 2010.

\bibitem{HermiteSobolev}
S.~Thangavelu.
\newblock On regularity of twisted spherical means and special {H}ermite
  expansions.
\newblock {\em Proceedings of the Indian Academy of Sciences - Mathematical
  Sciences}, 103(3):303, Dec 1993.

\bibitem{Torrilhon2015}
M.~{T}orrilhon.
\newblock Convergence study of moment approximations for boundary value
  problems of the {B}oltzmann-{BGK} equation.
\newblock {\em Communications in Computational Physics}, 18(03):529--557, 2015.

\bibitem{Torrilhon2017}
M.~Torrilhon and N.~Sarna.
\newblock Hierarchical {B}oltzmann simulations and model error estimation.
\newblock {\em Journal of Computational Physics}, 342:66 -- 84, 2017.

\bibitem{Ukai}
S.~Ukai.
\newblock Solutions of the {B}oltzmann equation.
\newblock In {\em Patterns and Waves}, volume~18 of {\em Studies in Mathematics
  and Its Applications}, pages 37 -- 96. Elsevier, 1986.

\end{thebibliography}
\end{document}